\theoremstyle{plain}
\newtheorem{theorem}{Theorem} 
\newtheorem{lemma}[theorem]{Lemma}
\newtheorem{corollary}[theorem]{Corollary}
\newtheorem{proposition}[theorem]{Proposition}
\newtheorem{observation}[theorem]{Observation}
\newtheorem{conjecture}[theorem]{Conjecture}
\def\eps{\epsilon}
\def\DT{\mbox{\ensuremath{\mathcal{DT}}}\xspace}
\mathchardef\mhyphen="2D 
\def\R{\mbox{\ensuremath{\mathbb R}}\xspace}
\def\F{\mbox{\ensuremath{\mathcal F}}\xspace}
\def\HH{\mbox{\ensuremath{\mathcal H}}\xspace} 
\def\E{\mbox{\ensuremath{\mathcal E}}\xspace}
\def\B{\mbox{\ensuremath{\mathcal B}}\xspace}
\def\P{\mbox{\ensuremath{\mathcal P}}\xspace}
\DeclareMathSymbol{\lsb@l}{\mathalpha}{letters}{`l}
\begin{document}
\title{Proper Coloring of Geometric Hypergraphs\footnote{
		Research supported by the Lend\"ulet program of the Hungarian Academy of Sciences (MTA), under grant number LP2017-19/2017. First author was also supported by Hungarian National Science Fund (OTKA), under grant PD 108406 and by the J\'anos Bolyai Research Scholarship of the Hungarian Academy of Sciences. Second author was also supported by the Marie Sk\l odowska-Curie action of the EU, under grant IF 660400.}}

\author{Bal\'azs Keszegh\thanks{Alfr\'ed R\'enyi Institute of Mathematics, Hungarian Academy of Sciences,	Budapest, Hungary and MTA-ELTE Lend\"ulet Combinatorial Geometry Research Group, Institute of Mathematics, E\"otv\"os Lor\'and University (ELTE), Budapest, Hungary.
	\texttt{keszegh@renyi.hu}}
\and D\"om\"ot\"or P\'alv\"olgyi\thanks{MTA-ELTE Lend\"ulet Combinatorial Geometry Research Group, Institute of Mathematics, E\"otv\"os Lor\'and University (ELTE), Budapest, Hungary
	\texttt{dom@cs.elte.hu}}}

\date{}

\maketitle

\begin{abstract}
We study whether for a given planar family \F there is an $m$ such that any finite set of points can be $3$-colored such that any member of \F that contains at least $m$ points contains two points with different colors.
We conjecture that if \F is a family of pseudo-disks, then such an $m$ exists.
We prove this in the special case when \F is the family of all homothetic copies of a given convex polygon.
We also study the problem in higher dimensions.
\end{abstract}

\section{Introduction}\label{sec:Intro}

In the present paper, we primarily focus on the following proper coloring problem.
Given a finite set of points in the plane, $S$, we want to color the points of $S$ with a small number of colors such that every member of some given geometric family \F that intersects $S$ in many points will contain at least two different colors.

Pach conjectured in 1980 \cite{P80} that for every convex set $D$ there is an $m$ such that any finite set of points admits a $2$-coloring such that any translate of $D$ that contains at least $m$ points contains both colors.
This conjecture inspired a series of papers studying the problem and its variants - for a recent survey, see \cite{PPT}.
Eventually, the conjecture was shown to hold in the case when $D$ is a convex polygon in a series of papers \cite{P86,TT07,PTconvex}, but disproved in general \cite{PP}.
In fact, the conjecture fails for any $D$ with a smooth boundary, e.g., for a disk.

It follows from basic properties of {generalized Delaunay triangulations} (to be defined later) and the Four Color Theorem that for any convex $D$ it is possible to $4$-color any finite set of points such that any {\em homothetic copy}\footnote{A homothetic copy or homothet of a set is a scaled and translated copy of it (rotations are {\em not} allowed).} of $D$ that contains at least two points will contain at least two colors.
Therefore, the only case left open is when we have $3$ colors.
We conjecture that for $3$ colors the following holds.

\begin{conjecture}\label{conj:homothet} For every plane convex set $D$ there is an $m$ such that any finite set of points admits a $3$-coloring such that any homothetic copy of $D$ that contains at least $m$ points contains two points with different colors.
\end{conjecture}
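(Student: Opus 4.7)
The statement is a conjecture, so I will propose a plan of attack; the abstract promises a proof in the polygonal case, which is where I would focus. The natural starting point is the generalized Delaunay triangulation $\DT(S,D)$ alluded to in the excerpt: it is planar and has the property that every homothet of $D$ containing at least two points of $S$ contains a Delaunay edge. The Four Color Theorem then gives a $4$-coloring for $m=2$, and the task is to save a color at the price of enlarging $m$.

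For a convex $k$-gon $D$, I would stratify the edges of $\DT(S,D)$ by which side of $D$ witnesses them, that is, along which side an empty defining homothet is pinned. Each side-class inherits a natural one-dimensional order (via the direction orthogonal to that side) and should yield a digraph of bounded out-degree, giving an $O(k)$-degeneracy bound on $\DT(S,D)$. Degeneracy alone does not yield $3$-colorability, so I would pass from pairs to small witnesses: for an appropriate $c=c(k)$, I expect every homothet containing $m$ points to contain a minimal empty homothet containing between $2$ and $c$ points. The coloring task then reduces to properly $3$-coloring the hypergraph of small minimal witnesses, inheriting the $k$-class structure.

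The main obstacle is this last step. A plausible strategy is iterated peeling: repeatedly locate a vertex of low weighted degree in the small-witness hypergraph, assign it the third color in a way consistent with its neighborhood, and reduce to a simpler instance that one hopes is $2$-colorable using the polygonal structure. Making this rigorous requires careful bookkeeping of how each peel interacts with the $k$ side-classes, and this is where polygonality would enter essentially; it is analogous in spirit to the inductive peelings of Tardos--Tóth and Pach--Tóth for translates, but complicated by the multi-scale nature of homothets. Extending the argument to general convex $D$ seems to require genuinely new ideas, since for smooth $D$ the Delaunay structure admits arbitrarily large minimal empty homothets, invalidating any reduction to bounded-size witnesses---which is presumably why the statement is phrased as a conjecture rather than a theorem.
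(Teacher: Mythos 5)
Your outline correctly identifies the generalized Delaunay triangulation $\DT_D(S)$ as the right combinatorial object, and you are right that planarity plus the Four Color Theorem handles $m=2$ with four colors. But from that point on your plan diverges from anything that would close the argument, and you candidly flag the gap yourself: the ``iterated peeling'' step is never made concrete. That step is not a routine bookkeeping exercise---it is the entire theorem. Degeneracy bounds (which you also raise and correctly dismiss) can at best give $O(1)$-colorability, not $3$-colorability, and reducing to bounded-size ``minimal witnesses'' does not obviously help: the difficulty is not that witnesses are large, but that no local rule tells you which color to sacrifice at a peeled vertex so that no large homothet ever becomes monochromatic. Your plan as stated does not contain a mechanism that controls monochromatic homothets globally.

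The paper takes a genuinely different route, and the two key ingredients are absent from your proposal. First, rather than aiming for a proper $3$-coloring of some auxiliary hypergraph, the paper $3$-colors the \emph{planar graph} $\DT_D(S)$ using the theorem of Poh and Goddard, which guarantees that every monochromatic component is a \emph{path}. This is the structural handle: a monochromatic homothet with many points must contain a long stretch of one of these monochromatic paths. Second, the paper uses the \emph{self-coverability} of convex polygons (Theorem~\ref{thm:selfcover}): any homothet $D'$ can be covered by $O_D(k)$ smaller homothets avoiding any $k$ prescribed interior points. This lets one designate a sparse set $R$ of points (one non-extremal, non-endpoint vertex per ``cutable'' section of each long monochromatic path), recolor $R$ arbitrarily, and then argue via Lemma~\ref{lem:main} that every homothet with $m$ points either hits $R$ together with a same-colored point outside $R$, or (after self-covering) contains a large sub-homothet disjoint from $R$ that was already bichromatic. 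The final geometric case analysis leans on planarity of $\DT$, the pigeonhole over the $n$ sides of $D$, and Proposition~\ref{prop:corner}/Corollary~\ref{cor:split}---not on any ordering or degeneracy of Delaunay edges. In short: you would need to replace your peeling heuristic with (a) a planar $3$-coloring theorem with structured monochromatic components and (b) self-coverability, neither of which appears in your sketch.
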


The special case of Conjecture \ref{conj:homothet} when $D$ is a disk has been posed earlier by the first author \cite{Khalf}, and is also still open.
Our main result is the proof of Conjecture \ref{conj:homothet} for convex polygons.

\begin{theorem}\label{thm:main3} For every convex $n$-gon $D$ there is an $m$ such that any finite set of points admits a $3$-coloring such that any homothetic copy of $D$ that contains at least $m$ points contains two points with different colors.
\end{theorem}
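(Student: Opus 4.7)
The plan is to argue via the generalized Delaunay triangulation $\DT(S,D)$, the graph on vertex set $S$ whose edges join pairs $p,q\in S$ contained in a common homothet of $D$ empty of other points of $S$. Because homothets of a convex set form a family of pseudo-disks, $\DT(S,D)$ is planar, and for every homothet $D'$ of $D$ the subgraph induced on $D'\cap S$ is connected. These two properties already yield, via the four colour theorem, the easy $4$-colouring for $m=2$ mentioned in the introduction.

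To cut the number of colours from four to three (at the price of a larger $m$), my strategy is a layered decomposition. I would extract from $S$ a sequence of layers $L_1,L_2,\ldots,L_k$ with $S=L_1\cup\cdots\cup L_k$, and colour the points of $L_i$ by the residue $i\bmod 3$. Each $L_i$ is chosen to be simultaneously \emph{shallow}, meaning $|D'\cap L_i|\le C(n)$ for every homothet $D'$, and \emph{hitting}, meaning that $L_i$ meets every homothet containing at least two points of $S_i=S\setminus(L_1\cup\cdots\cup L_{i-1})$; equivalently, $L_i$ is a vertex cover of $\DT(S_i,D)$ of bounded depth. These two properties together force any homothet $D'$ with at least $C(n)+2$ points of $S$ to hit two \emph{consecutive} layers $L_i$ and $L_{i+1}$: at the first stage $i$ at which $D'\cap L_i\neq\emptyset$, shallowness leaves at least two points of $D'$ in $S_{i+1}$, which are then picked up by the next hitting layer $L_{i+1}$. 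Since $i\bmod 3\neq(i+1)\bmod 3$, the homothet is bichromatic, so $m=C(n)+2$ suffices.

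The main obstacle lies in constructing a shallow-and-hitting layer $L_i$ at each stage. To produce one, I would partition the edges of $\DT(S_i,D)$ according to which of the $n$ sides of $D$ the minimal enclosing homothet first touches upon shrinking, and then invoke the translate theorem (two colours suffice for translates of any convex polygon, by~\cite{P86,TT07,PTconvex}) within each of the resulting $n$ ``directional'' subfamilies to obtain a bounded-depth hitting set per direction. Amalgamating the $n$ partial hitting sets should yield a single layer $L_i$ with $C(n)$ depending at worst exponentially on $n$. Consequently the $m(n)$ emerging from this argument will be very far from the conjectural constant $m=3$, but should be enough to establish the mere existence claim of the theorem; pushing the dependence down to the conjectured value would require a fundamentally different, non-layered technique.
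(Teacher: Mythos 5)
Your proposal is genuinely different from the paper's argument, and unfortunately its key step is precisely an open problem.

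You propose to decompose $S$ into layers $L_1,\ldots,L_k$, where each $L_i$ is a \emph{shallow hitting set}: a vertex cover of $\DT_D(S_i)$ (so that every homothet with two or more points of $S_i$ meets $L_i$) whose intersection with every homothet of $D$ has size at most $C(n)$. Coloring $L_i$ by $i\bmod 3$ then works by the argument you give. The problem is the existence of such layers. Observe that if shallow hitting sets with depth $C(n)$ exist for the homothets of a convex polygon, then the same layered argument with $i\bmod k$ instead of $i\bmod 3$ yields a polychromatic $k$-coloring with $m_k=O(C(n)k)$: a homothet with at least $(C(n)+1)(k-1)+2$ points hits $k$ consecutive layers and hence all $k$ colors. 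But the paper explicitly states that the existence of \emph{any} $m_k$ for polychromatic $k$-coloring of homothets of general convex polygons is open and would be a significant strengthening of Theorem~\ref{thm:main3}. So your approach, if it worked, would resolve a stronger open conjecture; this is a strong indication that the construction step cannot be carried out with currently known tools.

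The specific construction you sketch does not fill this gap. Partitioning Delaunay edges by which side the minimal enclosing homothet first touches, and then applying the translate two-coloring theorem inside each directional subfamily, does not obviously produce a vertex cover that is simultaneously shallow against \emph{homothets}: the translate theorem controls monochromaticity of translates, not depth of a hitting set with respect to arbitrary scalings, and amalgamating $n$ directional hitting sets can blow up the depth in ways that are not controlled by any argument you give. Indeed, shallow hitting sets of this kind are known for translates and (via specialized arguments) for homothets of triangles and squares, which is exactly why polychromatic coloring is known in those cases and not in general.

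The paper avoids this obstacle entirely. It first applies the Poh--Goddard theorem to $3$-color $\DT$ so that every monochromatic component is a path, then selects a sparse repair set $R$ consisting of one non-extremal, non-endpoint vertex from each ``cuttable section'' (a bounded-length piece of a monochromatic path that fits in a monochromatic homothet), and recolors $R$. The correctness of the recoloring is established via the self-coverability theorem for convex polygons (Theorem~\ref{thm:selfcover}) together with a geometric case analysis of how monochromatic paths can cross the boundary of a homothet. This ``color-then-repair'' strategy sidesteps the need for any shallow hitting set and is what makes the proof go through for all convex polygons while polychromatic coloring remains open.
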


We would like to remark that the constructions from \cite{PP} do not exclude the possibility that for convex polygons the strengthening of Theorem \ref{thm:main3} using only $2$ colors instead of $3$ might also hold; this statement is known to hold for triangles \cite{KP11} and squares\footnote{And since affine transformation have no effect on the question, also for parallelograms.} \cite{AKV}.

The constant $m$ which we get from our proof depends not only on the number of sides, but also on the shape of the polygon.
However, we conjecture that this dependence can be removed, and in fact the following stronger conjecture holds for any {\em pseudo-disk arrangement}.
We define a pseudo-disk arrangement as a family of planar bodies whose boundaries are Jordan curves such that any member of the family intersects the boundary of any other member in a connected curve.\footnote{This is slightly non-standard, as usually it is assumed that any two boundaries intersect at most twice (the structure of the boundary curves is also called a {\em pseudo-circle arrangement}).
We use our definition as in our case any family of homothets of a convex set forms a pseudo-disk arrangement, see, e.g., \cite{Ma2000}.}

\begin{conjecture}\label{conj:pseudo} There is an $m$ such that for any pseudo-disk arrangement any finite set of points admits a $3$-coloring such that any pseudo-disk that contains at least $m$ points contains two points with different colors.
\end{conjecture}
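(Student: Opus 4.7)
The plan is to recast Conjecture~\ref{conj:pseudo} as a hypergraph-coloring question and then to imitate the ``generalized Delaunay triangulation plus reducible configuration'' paradigm that underlies Theorem~\ref{thm:main3}, while simultaneously tightening the numerical bound from the large constant $m$ down to $m=3$. Define the $3$-uniform hypergraph $H$ on vertex set $S$ whose hyperedges are the triples $\{p,q,r\}\subseteq S$ contained in some pseudo-disk of $\F$. A coloring of $S$ witnesses the conjecture if and only if it is a proper coloring of $H$: indeed, a pseudo-disk $D$ with $|D\cap S|\ge 3$ contains two different colors precisely when some (equivalently, every) 3-subset of $D\cap S$ is non-monochromatic. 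So the task reduces to showing that $\chi(H)\le 3$.

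The natural attack is induction on $|S|$, removing a carefully chosen point and extending the coloring. Toward this, I would first record that the ``first-level Delaunay graph'' $G=(S,E)$ with $\{p,q\}\in E$ iff some $D\in\F$ satisfies $D\cap S=\{p,q\}$ is planar---this is standard for pseudo-disk arrangements and follows from the connectedness of pairwise boundary intersections. Planarity yields a vertex $p$ of bounded $G$-degree. The goal would then be to establish a stronger local structure theorem around $p$: after coloring $S\setminus\{p\}$ by induction, the pseudo-disks of $\F$ that contain $p$ together with at least two other points cannot contribute monochromatic pairs in all three color classes, so at least one color remains available for $p$. One might try to formalize this via a cyclic order on the pseudo-disks incident to $p$ obtained by sweeping along their boundaries around $p$, and arguing that a single ``arc'' of this cyclic order can host only one forbidden color.

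The main obstacle is exactly this last step. With only three colors and the very tight bound $m=3$, the induction has essentially no slack: every pseudo-disk through $p$ that already contains two points of the same color forbids that color at $p$, and a priori nothing prevents all three colors from being forbidden simultaneously. Overcoming this will almost certainly require strengthening the inductive hypothesis---possibly by orienting or labelling the points so that the color forced at $p$ is canonical, in the spirit of the proof of Theorem~\ref{thm:main3}---together with a new geometric lemma that genuinely uses the connectedness of pseudo-disk boundary intersections, a property not really exploited in the polygonal case, where the piecewise-linear structure provides a different handle. Given that even the disk case is open \cite{Khalf}, the proof should be expected to demand a substantively new structural insight rather than a routine adaptation of the argument for homothets of convex polygons.
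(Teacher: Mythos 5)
The paper does not prove this statement. Conjecture~\ref{conj:pseudo} is posed as an open problem, and even its special case for disks (attributed to \cite{Khalf}) is noted to be open. So there is no ``paper's own proof'' to compare against. Your proposal is honest about this---you explicitly say the argument requires ``a substantively new structural insight'' and does not close the gap---so it cannot be counted as a proof, and indeed shouldn't be. On the substance, the obstacle you identify is exactly the right one: with $m=3$ and $3$ colors there is no slack in a deletion-plus-extension induction, since a bounded-degree Delaunay vertex can still lie in pseudo-disks that already see two equal colors of each of the three classes, and nothing in the planarity of $G$ rules this out.

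One smaller inaccuracy worth flagging: your reduction to the $3$-uniform hypergraph $H$ is one-directional, not an equivalence. If a pseudo-disk $D$ has $|D\cap S|=4$ with colors $\{1,1,1,2\}$, then $D$ is non-monochromatic (which is all the conjecture demands), yet $H$ has a monochromatic hyperedge $\{p,q,r\}$ for the three points of color $1$. So $\chi(H)\le 3$ is sufficient for Conjecture~\ref{conj:pseudo} but is a strictly stronger assertion, and the parenthetical ``equivalently, every 3-subset'' is false. This matters because your proposed induction is trying to prove the stronger statement, which could make the already-tight counting at $p$ even harder; a proof of the conjecture is free to allow monochromatic triples inside large pseudo-disks as long as the full pseudo-disk sees a second color. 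If you want a hypergraph formulation that is genuinely equivalent, take the (non-uniform) hypergraph whose edges are the sets $D\cap S$ over all pseudo-disks with $|D\cap S|\ge 3$, or, following the paper's own framework, take as edges only the triples $D\cap S$ with $|D\cap S|$ \emph{exactly} $3$ and then show separately (via a Delaunay/self-coverability argument) that non-monochromatic triples force non-monochromatic larger pseudo-disks.
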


In Conjecture \ref{conj:pseudo} the constant $m$ might in fact be quite a small number.
In an earlier version of this paper we have conjectured that $m=3$ might be sufficient; this, however, has been disproved by G\'eza T\'oth (personal communication); $m=4$ is still possible.

Conjecture \ref{conj:pseudo} also has a natural dual counterpart.

\begin{conjecture}\label{conj:dual} There is an $m$ such that the members of any pseudo-disk arrangement admit a $3$-coloring such that any point that is contained in at least $m$ pseudo-disks is contained in two pseudo-disks with different colors.
\end{conjecture}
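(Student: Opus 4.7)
The statement of Conjecture \ref{conj:dual} can be reformulated as a $3$-uniform hypergraph coloring problem: let $H$ be the hypergraph with vertex set the family $\F$ and hyperedges the triples $\{D_1, D_2, D_3\}$ with $D_1 \cap D_2 \cap D_3 \neq \emptyset$. The conjecture asserts that $H$ admits a vertex $3$-coloring with no monochromatic hyperedge; equivalently, that $\F$ can be partitioned into three subfamilies, each of \emph{depth at most $2$} (no point is contained in three members of the same subfamily).

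My plan is to proceed by induction on $|\F|$, peeling off one pseudo-disk at a time. Having chosen a pseudo-disk $D$ to remove and applied the inductive hypothesis to $\F \setminus \{D\}$, I would declare a color $c$ to be \emph{forbidden} for $D$ if some two members of color $c$, together with $D$, share a common point. To close the induction it suffices to find a $D$ for which at most two colors are forbidden. A natural candidate is an \emph{extreme} element: e.g., obtained by sweeping in a fixed direction and taking the last pseudo-disk encountered, or by picking one whose boundary has the smallest arc lying in the union, in the hope that $D$ interacts with only a bounded number of triple-intersection witnesses of the subarrangement.

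The key structural ingredient I would invoke is the Kedem--Livne--Pach--Sharir theorem and its offshoots: the boundaries of a pseudo-disk arrangement have $O(n)$ pairwise crossings in total, the union complexity is linear, and the Delaunay-type graph (edges between pairs of pseudo-disks whose intersection contains a point uncovered by any other member) is planar. Fixing one $D \in \F$, the arcs $\{\partial D' \cap D : D' \in \F \setminus \{D\}\}$ form a pseudo-segment arrangement inside $D$, which I would try to leverage to extract a $D$ whose removal destroys only $O(1)$ ``critical'' triple-points, in analogy with how low-degree vertices are harvested in planar peeling arguments.

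The main obstacle is that the constraints are genuinely $3$-uniform rather than pairwise: although pairwise intersection patterns of pseudo-disks are tightly controlled, triple intersections are substantially more flexible, and even an extreme disk $D$ may lie in $\Omega(n)$ triple-covered points, so a forbidden color $c$ may well be witnessed by many pairs of $c$-colored disks. A naive degeneracy-style induction is therefore doomed, and the argument would need either a carefully chosen potential function, a global flow or duality argument, or a reduction to a known decomposition result such as a $2$-colorability theorem for appropriate shallow hypergraphs. Since Conjecture \ref{conj:dual} --- like its primal counterpart Conjecture \ref{conj:pseudo} --- is open even for ordinary disks, the plan above should be read as a first line of attack, with a weakened variant (replacing the threshold $3$ by a larger constant, in the spirit of Theorem \ref{thm:main3}) as a realistic intermediate goal.
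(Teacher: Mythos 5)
This statement is Conjecture~\ref{conj:dual} in the paper; the authors do not prove it, they pose it as an open problem (indeed they remark that even the primal Conjecture~\ref{conj:pseudo} is open already for disks). So there is no paper proof to compare against, and you are right to flag at the end that your text is a ``first line of attack'' rather than a proof. Your reformulation is correct: asking that every point of depth $\ge 3$ see two colors is equivalent to asking that no three pairwise-same-colored pseudo-disks have a common point, i.e.\ a partition of $\F$ into three subfamilies of depth at most $2$.

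That said, as a proof the proposal has a genuine gap, and you identify it yourself: a degeneracy-style peeling argument needs a pseudo-disk $D$ whose removal leaves at most two forbidden colors, but no choice of ``extreme'' $D$ guarantees this. Concretely, an extreme disk $D$ can contain $\Omega(n)$ points of depth $\ge 3$ in the remaining arrangement, and the set of pairs $(D',D'')$ witnessing a forbidden color for $D$ is governed by the \emph{triple} intersection pattern, which the linear bounds you cite (Kedem--Livne--Pach--Sharir union complexity, planarity of the Delaunay-type graph) do not control --- those results constrain pairwise interactions and boundary complexity, not the number or structure of triple-covered cells. So the induction has no base on which to close, and invoking ``a carefully chosen potential function, a global flow or duality argument, or a reduction to a known decomposition result'' is naming the missing idea rather than supplying it. For calibration, what is actually known and cited in the paper is substantially weaker: Smorodinsky's result that pseudo-disk families admit a bounded-size such coloring (with $4$ colors for disks, extended by Cardinal--Korman to homothets of a convex body), and that already at depth threshold $2$. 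Bringing the number of colors down to $3$ at threshold $3$ is exactly the open content of the conjecture, and a depth-$2$ decomposition into three parts would in particular be a strong structural statement not implied by any of the tools you list. Your suggested fallback --- raising the threshold $3$ to a larger constant depending on the family, in analogy with Theorem~\ref{thm:main3} --- is the realistic intermediate target, and even that is not established for general pseudo-disks in the paper.
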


We believe that these are fundamental problems about geometric hypergraphs, and find it quite surprising that they have not been studied much.

The rest of this paper is organized as follows.
In the rest of this section, we give an overview of related results.
In Section \ref{sec:delaunay} we give the definition and basic properties of generalized Delaunay triangulations.
In Section \ref{sec:main3} we prove Theorem \ref{thm:main3}, using the proof method of \cite{AKV}.
In Section \ref{sec:highdim} we study the higher dimensional variants of the problem and present some constructions.
In Section \ref{sec:further} we briefly discuss further related topics.

\subsection*{Previous results.}

Most earlier papers on colorings and geometric ranges focused not on proper colorings, but on {\em polychromatic colorings} and its dual, {\em cover-decomposition}.
In the polychromatic $k$-coloring problem our goal is to color the points of some finite $S$ with $k$ colors such that every member of some family \F that contains many points of $S$ contains {\em all} $k$ colors.
Gibson and Varadarajan \cite{GV10} have shown that for every convex polygon $D$ there is a $c_D$ such that every finite set of points can be $k$-colored such that any translate of $D$ that contains at least $m_k=c_D k$ points contains all $k$ colors. 
Whether such a polychromatic $k$-coloring exists for homothetic copies of convex polygons for any $m_k$ is an open problem, which would be a significant strengthening of our Theorem ~\ref{thm:main3}.
This conjecture has only been proved in a series of papers for triangles \cite{CKMUtri,CKMUoct,KP11,selfcover,KP12,octantnine} and very recently \cite{AKV} for squares using the Four Color Theorem.
The derived upper bound on $m_k$ is polynomial in $k$ in both cases.
It is, however, conjectured in a much more general setting \cite{PPhD} that whenever $m_k$ exists, it is linear in $k$, just like for the translates of convex polygons.
This is also known for {\em axis-parallel bottomless rectangles}:\footnote{A bottomless rectangle is a planar set of the form $\{(x,y)\mid a\le x\le b, y\le c\}$.} any finite set of points can be $k$-colored such that any axis-parallel bottomless rectangle that contains at least $m_k=3k-2$ points contains all $k$ colors \cite{A+13}.
(The value of $m_k$ is known to be optimal only for $k=2$ \cite{Khalf}.)
Their proof reduces the problem to coloring a one-dimensional {\em dynamic point set} with respect to intervals, which turns the problem into a variant of online colorings.
We will not introduce these notions here; for some related results, see \cite{CKMUoct,nathan,octantnine}.

The dual notion of polychromatic colorings is {\em cover-decomposition}.
In the cover-decom\-po\-si\-tion problem we are given some finite family \F that covers some region $m_k$-fold (i.e., each point of the region is contained in at least $m_k$ members of \F) and our goal is to partition \F into $k$ families that each cover the region.
By considering the respective underlying incidence hypergraphs in the polychromatic coloring and in the cover-decomposition problems, one can see that they are about colorings of dual hypergraphs.\footnote{The dual of a hypergraph $\HH=(V,\E)$ is the hypergraph $\overline{\HH}$ with vertex set $\E$, edge set $V$, and with the incidences reversed, i.e., in $\overline{\HH}$ a vertex corresponding to $e\in \E$ is contained in the edge corresponding to $v\in V$ if and only if $v$ is contained in $e$ in \HH.}
In fact, the two problems are equivalent for translates of a given set, as the following observation shows.

\begin{observation}[Pach \cite{P80}] For any set $D$, if \HH is the inclusion hypergraph of some points and some translates of $D$, then the dual hypergraph of \HH is also such an inclusion hypergraph.
\end{observation}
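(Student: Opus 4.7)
The plan is to exhibit an explicit bijection between the incidences of the original hypergraph and those of a second inclusion hypergraph built from points and translates of $D$, and then observe that this second hypergraph is, by construction, the dual. Concretely, suppose $\HH$ has vertex set $P = \{p_1, \dots, p_n\}$ and edges given by the translates $T_j := D + t_j$ for $j = 1, \dots, m$. I would define dual points $q_j := -t_j$ (one per original translate) and dual translates $D_i' := D - p_i$ (one per original point). The dual hypergraph $\overline{\HH}$ has vertex set $\{T_j\}$ and, for each $p_i$, an edge $\{T_j : p_i \in T_j\}$; under the identification $T_j \leftrightarrow q_j$, I need to show this edge equals $\{q_j : q_j \in D_i'\}$.

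The verification is the single chain of equivalences
\[
p_i \in D + t_j \;\iff\; p_i - t_j \in D \;\iff\; -t_j \in D + (-p_i) \;\iff\; q_j \in D_i',
\]
which directly matches the incidence in $\HH$ with the incidence in the new inclusion hypergraph. Since each $D_i'$ is a translate of the same set $D$, the dual $\overline{\HH}$ is again an inclusion hypergraph of points and translates of $D$, as required.

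The only conceptual care needed — and this is what I expect to be the one potential stumbling block — is the sign convention. The naive attempt $q_j := t_j$, $D_i' := D + p_i$ fails because $p_i \in D + t_j$ is not equivalent to $t_j \in D + p_i$ unless $D$ is centrally symmetric about the origin; one really must reflect the coordinates on one side, which is precisely what the choices $q_j := -t_j$ and $D_i' := D - p_i$ do. Once this bookkeeping is set correctly, no further work is required: the observation reflects nothing more than the fact that containment in a translate is a translation-invariant relation on the abelian group of the plane.
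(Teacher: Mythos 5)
Your proof is correct, and the chain of equivalences $p_i \in D + t_j \iff p_i - t_j \in D \iff -t_j \in D - p_i$ is exactly right, including the crucial sign flip $q_j := -t_j$ (without which you would only obtain translates of the reflection $-D$, which suffices in spirit but not in letter). The paper itself states this observation without proof, citing Pach's 1980 paper, so there is no in-paper argument to compare against; what you wrote is the standard argument and the one Pach's observation rests on.
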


Combining this with the result of Gibson and Varadarajan \cite{GV10}, we get that for every convex polygon $D$ there is a $c_D$ such that if \F is an ($m_k=c_D k$)-fold covering of a region by the translates of $D$, then \F can be decomposed into $k$ coverings of the same region.
It follows from the proofs about polychromatic $k$-colorings for triangles that the same holds for coverings by the homothets of a triangle, with a polynomial bound on $m_k$ (this function is slightly weaker than what is known for the polychromatic $k$-coloring problem).
By homothets of other convex polygons, however, surprisingly for any $m$ it is possible to construct an indecomposable $m$-fold covering \cite{Kovindec}.
The homothets of the square are the only family which is known to behave differently for polychromatic coloring and cover-decomposition.

Proper colorings of (primal and dual) geometric hypergraphs have been first studied systematically in \cite{Khalf}, for halfplanes and axis-parallel bottomless rectangles, proving several lower and upper bounds.
Other papers mainly studied the dual variant of our question.
Smorodinsky \cite{Smo07} has shown that any pseudo-disk family can be colored with a bounded number of colors such that every point covered at least twice is covered by at least two differently colored disks.
He also proved that $4$ colors are sufficient for disks, and later this was generalized by Cardinal and Korman \cite{CK} to the homothetic copies of any convex body.
Smorodinsky has also shown that any family of $n$ axis-parallel rectangles can be colored with $O(\log n)$ colors such that every region covered at least twice is covered by at least two differently colored rectangles.
This was shown to be optimal by Pach and Tardos \cite{PaT10}; they proved that there is a $C$ such that for every $m$ there is a family of $n$ axis-parallel rectangles such that for any ($C\frac{\log n}{m\log m}$)-coloring of the family there is a point covered by exactly $m$ rectangles, all of the same color.
It was shown by Chen et al. \cite{CPST09}, answering a question of Brass, Moser and Pach \cite{BMP}, that for every $c$ and $m$ there is a finite point set $S$ such that for every $c$-coloring of $S$ there is an axis-parallel rectangle containing $m$ points that are all of the same color.
This latter construction is the closest to the problem that we study.
It also shows why the pseudo-disk property is crucial in Conjecture \ref{conj:pseudo}.

Rotation invariant families have also been studied.
It was shown in \cite{PTT} using the Hales-Jewett theorem \cite{HJ} that for every $c$ and $m$ there is a finite planar point set $S$ such that for every $c$-coloring of $S$ there is a line containing $m$ points that are all of the same color.
Using duality, they have also shown that this implies that for every $c$ and $m$ there is a finite collection of lines such that for every $c$-coloring of the lines there is a point covered by exactly $m$ lines, all of the same color.
Halfplanes, on the other hand, behave much more like one-dimensional sets and admit polychromatic colorings.
It was shown in \cite{SY12}, improving on earlier results \cite{A+08b,Khalf,PT}, that any finite point set can be $k$-colored such that any halfplane that contains $m_k=2k-1$ points contains all $k$ colors, and this is best possible.
In the dual, they have shown that any finite set of halfplanes can be $k$-colored such that any point that is covered by at least $m_k=3k-2$ halfplanes is covered by all $k$ colors.
This bound is not known to be best possible, except for $k=2$ \cite{F10}.
Except for this last sharpness bound, all other results were extended to pseudo-halfplanes in \cite{KPpseudo}.

\section{Generalized Delaunay triangulations}\label{sec:delaunay}

With a slight perturbation of the points, it is enough to prove Theorem \ref{thm:main3} (or any similar statement) for the case when the points are in a {\em general position with respect to} the convex polygon $D$ in the sense that no two points are on a line parallel to a side of $D$ and no four points are on the boundary of a homothet of $D$.
In the following, we always suppose that our point set $S$ is in general position with respect to $D$.
We will also suppose that $D$ is open - this does not alter the validity of the statements and makes some of the arguments simpler to present.

We say that a halfplane $H$ is {\em supporting $D$ at a side $ab$} of $D$ if $H$ contains $D$ and $ab$ is on the boundary of $H$.
A point $s$ from some set is {\em extremal} (for a side $ab$) if a translate of a halfplane supporting $D$ at a side $ab$ contains $s$ but no other point of the set.

We define a plane graph whose vertices are the points of $S$, called the {\em generalized Delaunay triangulation} of $S$ with respect to $D$, and we denote it by $\DT_D(S)$, or when clear from the context, simply by \DT.
As it leads to no confusion, we will not differentiate between the points and their associated vertices.
Two points of $S$ are connected by a straight-line edge in \DT if there is a homothet of $D$ that contains only them from $S$.
It follows \cite{BCCS10,Kvoronoi} that \DT is a well-defined connected plane graph whose inner faces are triangles.
We recall a few simple statements about \DT, most of which also appeared in \cite{AKV}.

\begin{proposition}\label{prop:connected}
If $D'$ is a homothet of $D$, the points $D'\cap S$ induce a connected subgraph of $\DT_D(S)$.
\end{proposition}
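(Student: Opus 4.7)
The plan is to argue by induction on $n = |D' \cap S|$. The base cases are immediate: if $n \le 1$ there is nothing to prove, and if $n = 2$ with $D' \cap S = \{p, q\}$ then $D'$ itself is a homothet of $D$ meeting $S$ in exactly $\{p, q\}$, so $pq$ is an edge of $\DT_D(S)$ by definition. For $n \ge 3$ my strategy is to peel off one vertex via a continuous shrinking, apply induction to the remaining $n-1$ points, and then patch the peeled vertex back in via a second shrinking that produces a direct $\DT$-edge.

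Concretely, I would pick a generic $s^* \in D' \setminus S$ and consider the one-parameter family of homothets
\[
D_t \;=\; s^* + (1-t)(D' - s^*), \qquad t \in [0, 1],
\]
which continuously contracts $D' = D_0$ toward $\{s^*\}$ while staying inside $D'$ by convexity. For generic $s^*$, the points of $S$ leave $D_t$ one at a time; label them $s_1, \ldots, s_n$ in order of departure. At the instant just after $s_1$ leaves, $D_t$ is a homothet of $D$ whose intersection with $S$ is precisely $\{s_2, \ldots, s_n\}$, so the inductive hypothesis supplies a connected induced subgraph of $\DT_D(S)$ on this $(n-1)$-element set.

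It remains to exhibit a $\DT$-edge from $s_1$ to some $s_j$ with $j \ge 2$. For this, I would rerun the shrinking with a new center $s^{**}$ chosen generically close to $s_1$, so that $s_1$ becomes the last point of $S$ to leave the family $D_t' = s^{**} + (1-t)(D' - s^{**})$. Letting $s_j$ denote the point that leaves immediately before $s_1$, we see that just before its departure $D_t'$ contains exactly $\{s_1, s_j\}$ from $S$, which is by definition the edge $s_1 s_j \in \DT_D(S)$. Since $s_j \in \{s_2, \ldots, s_n\}$, this edge glues $s_1$ onto the inductively connected remainder, completing the induction.

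The main obstacle — and the only step that requires care — is verifying that in each shrinking the departures really are one-at-a-time. The stated general position forbids four points on the boundary of a single homothet, but in a one-parameter family of homothets we need only exclude two simultaneous departures. For any fixed pair of points of $D' \cap S$, the locus of centers forcing them to leave together is codimension one in the plane, so a generic choice of $s^*$ (and of $s^{**}$ within an arbitrarily small neighborhood of $s_1$) avoids every such locus at once. Once this genericity is arranged, the two rounds of the shrinking argument fit together without further subtlety.
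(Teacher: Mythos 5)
The paper itself gives no proof of Proposition~\ref{prop:connected}; it is stated as one of ``a few simple statements about \DT'' that are treated as known (appearing in the cited literature such as \cite{AKV,BCCS10,Kvoronoi}). Your argument is the standard shrinking-homothet proof of Delaunay-type connectivity and it is correct. The two-stage shrinking is a clean way to run the induction: the first generic shrink peels off one point $s_1$ and yields, just after $s_1$ departs, a smaller homothet $D_t$ with $D_t\cap S=\{s_2,\dots,s_n\}$, to which the inductive hypothesis applies; the second shrink, re-centered near $s_1$, produces a homothet containing exactly $\{s_1,s_j\}$ and hence the $\DT$-edge $s_1s_j$ that glues $s_1$ to the connected remainder. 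On the genericity point you flag, note that two departures through the \emph{same} side of the shrinking polygon are already impossible by the general-position assumption (that would put the two points on a line parallel to a side of $D$); so the generic choice of center is needed only to separate departures through distinct sides, and there the bad locus in $s^*$ is indeed a finite union of codimension-one sets, easily avoided while also staying in any prescribed open set (e.g.\ near $s_1$). The argument is complete as stated.
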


\begin{corollary}[\cite{AKV}]\label{cor:split}
If $D'$ is a homothet of $D$ and $e$ is an edge of \DT that crosses $D'$ and splits it into two parts, then one of these parts does not contain any point from $S$.
\end{corollary}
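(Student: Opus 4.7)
The plan is to argue by contradiction using Proposition~\ref{prop:connected} together with the fact that \DT is a straight-line plane graph. Suppose both parts of $D'\setminus e$ contain a point of $S$, say $p$ in one part and $q$ in the other. Then $p,q\in D'\cap S$, so Proposition~\ref{prop:connected} provides a path $p=v_0,v_1,\dots,v_k=q$ in \DT with $v_i\in D'\cap S$ for every $i$.

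Since $D'$ is convex, every straight-line edge $v_iv_{i+1}$ lies inside $D'$, so the path traces a polygonal curve contained in $D'$ that connects $p$ to $q$. This curve must cross $e$, since $p$ and $q$ lie in distinct components of $D'\setminus e$. But \DT is a plane graph drawn with straight edges, so two edges can meet only at a shared endpoint; hence the crossing can only occur at $a$ or $b$, forcing some $v_i$ to coincide with an endpoint of $e$.

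The crux of the argument, and the step I expect to be the main obstacle, is to derive a contradiction from this: I will show that, under the hypothesis that $e$ splits $D'$, neither endpoint of $e$ lies in $D'$. The observation is that if $a$ were in the open interior of $D'$, then $D'\setminus e$ would still be connected, because near $a$ the set $e\cap D'$ is merely a ray emanating from $a$, and one can detour inside $D'$ around $a$ to join the two sides of $e$; hence $e$ would fail to split $D'$. Since $D'$ is open, the only remaining possibility is that $a$ and $b$ both lie on $\partial D'$ or outside $\overline{D'}$, so in particular $a,b\notin D'\cap S$. This contradicts the conclusion of the previous paragraph that some $v_i\in\{a,b\}$, and thereby completes the proof.
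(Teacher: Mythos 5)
Your proof is correct and is the natural (and essentially the only) way to derive the corollary from Proposition~\ref{prop:connected}; the paper itself states the corollary without proof, citing \cite{AKV}, but this is precisely the intended derivation: connectedness gives a path in $D'\cap S$, convexity of $D'$ keeps the polygonal curve inside $D'$, and planarity of \DT plus the fact that $a,b\notin D'$ (since $e$ genuinely disconnects $D'$) yields the contradiction. One could streamline the final step by noting directly that the crossing point lies in $D'$ while neither endpoint of $e$ does, so the crossing is at an interior point of $e$ and of some path edge, violating planarity outright, but your version via ``some $v_i\in\{a,b\}$'' is equivalent.
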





\section{Framework}\label{sec:framework}

In this section we outline the main idea behind the proof of Theorem \ref{thm:main3}.
As discussed in Section \ref{sec:delaunay}, we can suppose that $S$ is in general position with respect to $D$, and we can consider the generalized Delaunay triangulation $\DT=\DT_D(S)$.
We will take an initial coloring of $S$ that has some nice properties.
More specifically, we need a $3$-coloring for which the assumptions of the following lemma hold for $c=3$ and for some constant $t$ that only depends on $D$.

\begin{lemma}\label{lem:main}
For every convex polygon $D$ for every $c$ and $t$ there is an $m$ such that\\
if for a $c$-coloring of a point set $S$ and a set of points $R\subset S$ and for every homothet $D'$
\begin{itemize} 
\item[(i)]  if $D'\cap S$ is monochromatic with at least $t$ vertices, $D'$ contains a point of $R$, 
\item[(ii)] if $D'$ contains $t$ points from $R$ colored with the same color, $D'$ also contains a point from $S\setminus R$ that has the same color,
\end{itemize}
then there is a $c$-coloring of $S$ such that no homothet that contains at least $m$ points of $S$ is monochromatic.
\end{lemma}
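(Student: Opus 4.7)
The plan is to construct $\chi'$ by recoloring only the points in $R$, keeping $\chi$ unchanged on $S\setminus R$. Hypothesis (i) makes this sufficient: every $\chi$-monochromatic homothet with at least $t$ points is anchored at an $R$-point that we are free to toggle. Hypothesis (ii) prevents such modifications from creating new large monochromatic homothets, since any candidate monochromatic homothet of the new coloring with many same-colored $R$-points must already contain a same-colored point of $S\setminus R$, limiting how much a single color can be amplified on $R$ alone.

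I would proceed by induction on $|S|$, with a trivial base case $|S|<t$. For the inductive step, I would apply the lemma recursively to $(R,\chi|_R,R')$ for a suitable $R'\subsetneq R$ chosen so that the analogues of (i) and (ii) for $R$ hold with some controlled threshold; this returns a proper $c$-coloring $\chi^{\ast}$ of $R$ in which no homothet is $\chi^{\ast}$-monochromatic on more than $m_R$ points of $R$. I then set $\chi':=\chi$ on $S\setminus R$ and $\chi':=\chi^{\ast}$ on $R$.

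For verification, suppose $D'$ is $\chi'$-monochromatic of color $c_0$ with $|D'\cap S|$ very large. Then $|D'\cap R|<m_R$, so $D'\cap(S\setminus R)$ still contains many points, all of color $c_0$ under $\chi$. Using $\DT=\DT_D(S)$ together with Propositions \ref{prop:connected} and \ref{prop:corner} and Corollary \ref{cor:split}, I would prune $D'$ along \DT-edges joining $R$ to $S\setminus R$ to isolate a sub-homothet $D''\subseteq D'$ that contains no point of $R$ yet retains at least $t$ color-$c_0$ points of $S\setminus R$; then $D''\cap S$ is $\chi$-monochromatic of size at least $t$ and disjoint from $R$, contradicting (i). The main obstacle is twofold: first, producing an $R'\subsetneq R$ so that (i) and (ii) transfer to $(R,\chi|_R,R')$ with a controlled threshold, which requires making concrete a self-similarity of the hypotheses that the statement leaves implicit; second, performing the Delaunay pruning without depleting the monochromatic points, where the polygonal structure of $D$ and the bounded cone-directions around a \DT-vertex (Proposition \ref{prop:corner}) enter. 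The resulting $m$ will grow iteratively with the recursion depth, as a polynomial (or tower) in $c$, $t$, and the number of sides of $D$.
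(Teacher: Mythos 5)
Your opening idea is exactly right: recolor only the points in $R$, keep $S\setminus R$ fixed, and use (ii) to handle homothets with many $R$-points and (i) to handle homothets with few. But the two mechanisms you propose to implement this are both the wrong tools, and both have genuine gaps.

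First, the recursion is unnecessary and unproven. The paper does not color $R$ by a recursive call; it simply changes the color of each point of $R$ \emph{arbitrarily} to any other color. That suffices: if $D'$ contains at least $ct$ points of $R$ then by pigeonhole $t$ of them had the same original color, (ii) gives a point of $S\setminus R$ with that color inside $D'$, and that point's color is unchanged while all the $R$-points with that original color have been moved off it — so $D'$ is bichromatic. Your plan of finding $R'\subsetneq R$ so that (i), (ii) ``transfer'' and invoking the lemma inductively is precisely the step you flag as ``implicit''; it is not implicit, it is absent, and there is no evident way to make it work, because the hypotheses (i), (ii) concern homothets intersected with $S$, not with $R$, and do not self-replicate on the subset $R$.

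Second, the ``Delaunay pruning'' is not a real argument and is also the wrong tool. Lemma~\ref{lem:main} is proved in the paper without any reference to $\DT$; the Delaunay triangulation enters only later (Section~\ref{sec:main3}) to \emph{produce} a coloring and a set $R$ satisfying (i), (ii). The missing ingredient for the ``few $R$-points'' case is Theorem~\ref{thm:selfcover} (self-coverability of convex polygons): if $D'$ contains fewer than $ct$ points of $R$, cover $D'$ by at most $c_D ct$ homothets of $D$ none of which has any point of $R$ in its interior; one of them, $D''$, then contains no $R$-point and at least $(m-3c_Dct)/(c_Dct)$ points of $S$, which is at least $t$ for $m=c_Dct(t+3)$, so by (i) $D''$ was already bichromatic, and since the recoloring only touches $R$ it remains so. Propositions~\ref{prop:connected}, \ref{prop:corner} and Corollary~\ref{cor:split} do not give you a way to carve out such a $D''$, and your own phrasing (``without depleting the monochromatic points'') concedes that you do not have a mechanism. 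So the gap is concrete: you need the self-coverability theorem, and with it the whole induction collapses into a one-step argument.
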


To prove Lemma \ref{lem:main}, we use the following theorem about the so-called {\em self-coverability} of convex polygons.

\begin{theorem}[\cite{selfcover}]\label{thm:selfcover}
Given a closed convex polygon $D$ and a collection of $k$ points in its interior, we can take $c_Dk$ homothets of $D$ whose union is $D$ such that none of the homothets contains any of the given points in its interior, where $c_D$ is a constant that depends only on $D$.
\end{theorem}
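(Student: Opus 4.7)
The plan is to exploit the generalized Delaunay triangulation $\DT_D$ introduced in Section~\ref{sec:delaunay}, applied to the $k$ given points together with $O_D(1)$ auxiliary points near $\partial D$. Each inner triangular face of the resulting triangulation will be inscribed in a homothet of $D$ that is ``empty'' of all the given points; these homothets will form the desired cover.

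First I would augment the set $\{p_1,\dots,p_k\}$ by a constant (depending only on $D$) number of auxiliary points placed just inside $\partial D$, with a few points distributed along each side of $D$ and some near each vertex of $D$. A small generic perturbation ensures the resulting set $S$ is in general position with respect to $D$. Next, construct $\DT_D(S)$: by Section~\ref{sec:delaunay} its inner faces are all triangles, and by Euler's formula their number is $O(|S|) = O(k)$, with the hidden constant depending only on the number of sides of $D$. For each inner triangular face $F = \triangle abc$, the defining property of $\DT_D$ (applied to an appropriately chosen circumscribing homothet through $a,b,c$) supplies a homothet $D_F$ of $D$ containing $a,b,c$ on its boundary and no other point of $S$ in its interior. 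Since $F \subseteq D_F$, the union $\bigcup_F D_F$ covers $D$ (the auxiliary boundary points ensure the faces tile $D$ all the way to $\partial D$), and no $p_j$ lies in the interior of any $D_F$, giving the desired cover of size $c_D\cdot k$.

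The main obstacle is ensuring the triangulation tiles all of $D$ rather than only the convex hull of $\{p_j\}$: without auxiliary boundary points, the ``skin'' of $D$ between this hull and $\partial D$ is not triangulated at all. Careful placement of a constant-size auxiliary set, with sufficient density near each vertex of $D$, is needed to avoid leaving holes in the triangulation and to keep things in general position. A secondary issue is that a witnessing homothet $D_F$ for a face near $\partial D$ could in principle extend outside $D$; this is handled by translating $D_F$ along the nearby side of $D$ toward the interior---convexity of $D$ ensures such a translate still contains $F$ and avoids $S$, while being contained in $D$. An alternative route, worth considering if the boundary treatment becomes too delicate, is induction on $k$: at each step remove the point $p$ closest to a fixed vertex $v$ of $D$, use Proposition-\ref{prop:corner}-style reasoning to cover the corresponding ``innermost'' homothet $v + \lambda(p)(D-v)$ by $O_D(1)$ smaller homothets avoiding $p$, and recurse on the outside---but the outside is not itself a homothet of $D$, so making this induction go through cleanly is the real work. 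Either way, once the boundary details are verified, the linear face count of $\DT_D(S)$ gives the bound $c_D k$ claimed in the theorem.
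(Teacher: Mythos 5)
This theorem is quoted from the external reference \cite{selfcover}; the present paper does not prove it, so I am evaluating your proposal on its own terms.

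There is a genuine gap, and it is precisely where you locate ``the main obstacle'': the coverage of $D$ near its boundary. The inner faces of $\DT_D(S)$ tile only $\mathrm{conv}(S)$, and since all points of $S$ (including your auxiliary points) lie strictly inside $D$, a nonempty annular region between $\mathrm{conv}(S)$ and $\partial D$ is not covered by any face. Your argument then has to rely on the witnessing homothets $D_F$ (not the faces) covering this region, but no reason is given why they should, and in general they need not: the empty homothet through the three vertices of a face behaves erratically near the convex hull boundary. Worse, the theorem requires the union of the chosen homothets to \emph{be} $D$, which forces each homothet to lie inside $D$; a Delaunay-witnessing homothet for a face with an edge on $\partial\,\mathrm{conv}(S)$ will typically stick out of $D$, and can even have arbitrarily large scaling factor. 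Your fix --- translate $D_F$ inward along a nearby side --- is asserted but not verified, and it is not sound in general: a translated copy no longer has the three face vertices on its boundary, need not contain $F$, and can acquire new points of $S$ in its interior. Near a vertex of $D$ there is also no single side along which to translate. These are not ``boundary details to be verified''; they are the entire content of the theorem, and the Delaunay machinery does not resolve them. (The actual proof in \cite{selfcover} proceeds by a direct geometric decomposition/induction tailored to keep all pieces inside $D$, not via $\DT_D$.)

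A secondary, smaller point: the claim that a constant number of auxiliary points ``with sufficient density near each vertex of $D$'' suffices is suspect, because how fine the boundary mesh must be to keep the witnessing homothets inside $D$ depends on how close the given points $p_j$ come to $\partial D$, which is not bounded in terms of $D$ alone. Any correct argument must produce a bound $c_D k$ that is uniform over all configurations of the $k$ points, including ones hugging the corners of $D$; your construction as described does not visibly do this. Your ``alternative route'' via induction and onion-peeling from a vertex of $D$ is in fact much closer in spirit to a workable proof, but as you note yourself, making the recursion close (the complement of an inner homothet is not a homothet) is exactly the nontrivial part, and you do not carry it out.
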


\begin{figure}[h]
	\centering
	\includegraphics[width=6cm]{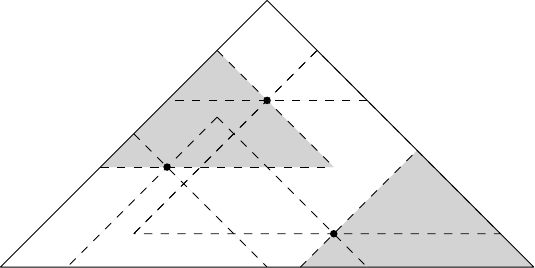}
	\caption{Illustration 
		for Theorem \ref{thm:selfcover} for triangles 
		(shading is only to improve visibility.)}
	\label{fig:selfcover}
\end{figure}

\begin{proof}[Proof of Lemma \ref{lem:main}]
The proper $c$-coloring will be simply taking the $c$-coloring given in the hypothesis, and recoloring each vertex in $R$ arbitrarily to a different color.
Now we prove the correctness of this new coloring.
Let $D'$ be a homothet of $D$ containing at least $m$ points (where $m$ is to be determined later).

Suppose first that $D'$ contains $m\ge ct$ points from $R$.
Using the pigeonhole principle, $D'$ contains at least $t$ points from $R$ that originally had the same color.
Using (ii), $D'$ will have a point both in $R$ and in $S\setminus R$ that had the same color.
These points will have different colors after the recoloring, thus $D'$ will not be monochromatic.

Otherwise, suppose that $D'$ contains $m$ points of which less than $ct$ are from $R$.
Apply Theorem \ref{thm:selfcover} with $D'$ and $R'=D'\cap R$.
This gives $c_Dct$ homothets (where $c_D$ comes from Theorem \ref{thm:selfcover}), each of which might contain at most three points on their boundaries (which include the points from $R'$), thus by the pigeonhole principle at least one homothet, $D''$, contains no points from $R$ and at least $\frac{m-3c_Dct}{c_Dct}$ points from $S\setminus R$.
If we set $m=c_Dct(t+3)$, this is at least $t$.
Thus, by $(i)$, $D''$ was not monochromatic before the recoloring.
As the recoloring does not affect points in $S\setminus R$, after the recoloring $D''$ (and so also $D'$) still contains two points that have different colors.
Thus $m=c_Dct(t+3)$ is a good choice for $m$ in both cases.
\end{proof}

Therefore, to prove Theorem \ref{thm:main3}, we only need to show that we can find a coloring with three colors that satisfy the conditions of Lemma \ref{lem:main} for some $t$.

\section{Proof of Theorem \ref{thm:main3}}\label{sec:main3}

In this section we prove Theorem \ref{thm:main3}, that is, we show that for every convex polygon $D$ there is an $m$ such that any finite set of points $S$ admits a $3$-coloring such that there is no monochromatic homothet of $D$ that contains at least $m$ points.
If one could find a $3$-coloring where every monochromatic component of \DT is bounded, then that would immediately prove Theorem \ref{thm:main3}.
This, however, is not true in general \cite{KMRV97}, only for bounded degree graphs \cite{EJ14}, but the \DT can have arbitrarily high degree vertices for any convex polygon, thus we cannot apply this result.
Instead, we use the following result (whose proof is just a couple of pages).

\begin{theorem}[Poh \cite{Poh}; Goddard \cite{Goddard}]\label{thm:pathcol}
The vertices of any planar graph can be $3$-colored such that every monochromatic component is a path.
\end{theorem}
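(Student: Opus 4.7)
The plan is to prove the theorem by induction on $|V(G)|$, strengthening the statement in order to carry enough structure through the recursion. A natural strengthening is the following: for every 2-connected plane graph $G$ with outer face bounded by a cycle $C$, and every pair of vertex-disjoint paths $P$ and $Q$ contained in $C$, the vertices of $G$ admit a 3-coloring with colors $\{1,2,3\}$ such that (a) each color class induces a linear forest, (b) every vertex of $P$ receives color $1$, and (c) every vertex of $Q$ receives color $2$. The original statement follows by taking $P=Q=\emptyset$, noting that we may reduce to the 2-connected case by handling blocks separately, and optionally completing $G$ to a near-triangulation to simplify casework.

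First I would handle the base cases: when $G$ equals $C$ (no interior vertices), the coloring is produced directly by coloring $V(P)$ with color $1$, $V(Q)$ with color $2$, and the remaining boundary arcs using the third color together with the unused one in a pattern that avoids monochromatic triples. For the inductive step, I would distinguish cases based on the structure of $C\setminus(V(P)\cup V(Q))$. If $C$ has an interior chord $e$, the chord splits $G$ into two smaller plane graphs $G_1,G_2$ sharing only $e$; I would apply the inductive hypothesis to each piece after extending $P$ or $Q$ on each side to include the chord endpoints with appropriate colors, and then glue the colorings along the shared edge. If $C$ has no chord, then by the near-triangulation property each inner face incident to $C$ exposes a common interior vertex, and I would use this vertex to either extend $P$ or $Q$ by one step or to find a vertex whose pre-assignment yields a strictly smaller instance of the same hypothesis.

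The main obstacle is bookkeeping at the gluing step: when two inductive colorings are combined along a chord or an extended boundary path, one must ensure that no monochromatic cycle is created and that no vertex ends up with three neighbors of its own color (which would create a branch in its monochromatic component, violating the path property). Guaranteeing this typically requires a slightly stronger hypothesis than the one stated above, for instance specifying that the endpoints of $P$ and $Q$ have no same-colored neighbors outside of $P$ and $Q$, or forbidding certain pre-colored chords of $C$. Pinning down the inductive statement to be just strong enough to allow gluing, yet weak enough to remain true, is the crux; once the right statement is in hand, each inductive step reduces to a finite case analysis on how the prescribed boundary paths interact with the chosen chord or extension vertex.
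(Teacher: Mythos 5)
The paper does not prove this theorem itself; it is cited from Poh and Goddard as a black box, so there is no in-paper argument to compare against. That said, your proposal is recognizably the strategy of Poh's original proof: strengthen to a statement about $2$-connected plane graphs with two precolored boundary paths and recurse on chords. But as written the proposal has a genuine gap, and you essentially say so yourself. Two specific problems. First, your inductive hypothesis lets $P$ and $Q$ be arbitrary disjoint paths on the outer cycle $C$; in Poh's lemma the hypothesis is that $C$ is \emph{exactly} the concatenation of the two paths, so that every boundary vertex is precolored with color $1$ or $2$. With your weaker version the chord-splitting step leaves uncolored boundary arcs on both sides, and your base case ($G=C$, ``color the remaining arcs with the third color together with the unused one in a pattern that avoids monochromatic triples'') is not a well-defined procedure and it is unclear what invariant it is supposed to preserve for the gluing. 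Second, the invariant that actually makes the gluing close — roughly, that no vertex acquires three same-colored neighbors, and in particular that the endpoints of the precolored paths have no same-colored neighbor off the path — is the heart of the argument, and you flag it as ``the crux'' and then stop. In the chordless case one must also locate an interior vertex adjacent to both $P$ and $Q$, assign it color $3$, and show this splits the instance into smaller ones of the same shape; this does not obviously fall under either of your two stated subcases. So the high-level plan is right and matches the cited source in spirit, but the precise inductive statement and the verification that the recursion closes — which are the whole content of the proof — are missing.
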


To prove Theorem \ref{thm:main3}, apply Theorem \ref{thm:pathcol} to \DT to obtain a $3$-coloring where every monochromatic component is a path.
It follows from Lemma \ref{lem:main} that it is sufficient to show that for $t=4n+12$ (where $n$ denotes the number of sides of $D$) there is a set of points $R\subset S$ for which

\begin{itemize}
\item[(i)] for every homothet $D'$ if $D'\cap S$ is monochromatic with at least $t$ vertices, $D'$ contains a point of $R$, 
\item[(ii)] for every homothet $D'$ if $D'$ contains $t$ points from $R$ colored with the same color, $D'$ also contains a point from $S\setminus R$ that has the same color.
\end{itemize}

Now we describe how to select $R$.
First, partition every monochromatic path that has at least $t$ vertices into subpaths, called {\em sections}, such that the number of vertices of each section is at least $\frac t4$ but at most $\frac t2$.
We call such a section {\em cuttable} if there is a monochromatic homothet of $D$ that contains all of its points.
$R$ will consist of exactly one point from each cuttable section.
These points are selected arbitrarily from the non-extremal points of each section, except that they are required to be non-adjacent on their monochromatic path.
Since each section has at most two end points and $n$ extremal points, we can select such a point from each section if $\frac t4\ge n+3$.
For an $r\in R$ we denote its section by $\sigma_r$ and a (fixed) monochromatic homothet containing $\sigma_r$ by $D_r$.

Now we prove that $R$ satisfies the requirements (i) and (ii).

To prove (i), suppose that a homothet $D'$ is monochromatic with at least $t$ vertices.
Using Proposition \ref{prop:connected}, the subgraph induced on these vertices is connected.
As any monochromatic connected component is a path, $D'$ contains at least $t$ consecutive vertices of a monochromatic path, and thus also a section.
Because of $D'$ this section is cuttable, and thus contains a point of $R$.

To prove (ii), suppose that a homothet $D'$ contains $t$ points from $R$ colored with the same color, red.
Denote these points by $R'$.
For each $r\in R'$, the neighbors of $r$ in $\sigma_r$ are red but not in $R$, thus they must be outside $D'$, or otherwise (ii) holds and we are done.
Denote the geometric embedding of the two edges adjacent to $r$ in $\sigma_r$ by $\Lambda_r$.
Therefore, $\Lambda_r$ will intersect the boundary of $D'$ in two points for each $r\in R'$.
We claim that these two intersection points usually fall on the same side of $D'$, i.e., they are not separated along the boundary by a vertex.

\begin{proposition}\label{prop:most2side} Both intersection points of $\Lambda_r$ and the boundary of $D'$ are on the same side of $D'$ for all but at most $n$ points of $r\in R'$.
\end{proposition}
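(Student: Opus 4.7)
The plan is to associate each \emph{bad} $r\in R'$—one for which $p_r$ and $q_r$ lie on different sides of $D'$—to a distinct vertex of $D'$, thereby capping the number of bad $r$'s by $n$, the number of vertices of $D'$.

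For a bad $r$, the two segments $rp_r$ and $rq_r$ split $D'$ into two pie-slice regions; let $\Delta_r\subseteq D'$ denote the slice on the convex-sector side at $r$, i.e.\ the side corresponding to the angular sector at $r$ (spanned by the rays $\vec{ru_r}$ and $\vec{rv_r}$) whose opening is at most $\pi$. Because $p_r$ and $q_r$ lie on different sides of $D'$, the arc of $\partial D'$ bounding $\Delta_r$ must pass through at least one vertex of $D'$. I would set $w_r$ to be a canonically chosen such vertex, for instance the first one encountered when traversing this arc from $p_r$ toward $q_r$.

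The crux is that $r\mapsto w_r$ is injective. Suppose for contradiction that $w_{r_1}=w_{r_2}=w$ for two distinct bad points $r_1,r_2$. Because $\Lambda_{r_1}$ and $\Lambda_{r_2}$ are vertex-disjoint and embedded in the planar graph \DT, the slices $\Delta_{r_1}$ and $\Delta_{r_2}$ are either disjoint or nested. Since both have $w$ on their boundary, they must be nested; WLOG $\Delta_{r_2}\subseteq \Delta_{r_1}$, so $r_2$ lies in the interior of $\Delta_{r_1}$. Now I would apply Proposition~\ref{prop:corner} with $y=r_1$, $x=u_{r_1}$, $z=v_{r_1}$, cone $C$ equal to the convex angular sector at $r_1$, and $y'=r_2$. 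The conclusion supplies a \DT-neighbor of $r_2$ inside $D'\cap D_y$; but by hypothesis $r_2$'s neighbors on its own section $\sigma_{r_2}$ lie outside $D'$, so any such neighbor is a fresh point of $S\cap D'$ that yields the desired point of $S\setminus R$ (fulfilling (ii)) or else contradicts the non-crossing property of the embedded $\Lambda$'s.

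The main obstacle is exhibiting the homothet $D_y$ required to apply Proposition~\ref{prop:corner}: we need a single homothet of $D$ simultaneously containing the four points $u_{r_1}, r_1, r_2, v_{r_1}$ (two of which are outside $D'$ and two inside) and satisfying the cone inclusion $C\cap D'\subseteq D_y$. The construction must exploit that $D$ is a convex polygon—in particular, the vertex $w$ of $D'$ associated to $r_1$ serves as the corner around which $D_y$ can be drawn by enlarging and combining the Delaunay-witnessing homothets of the edges $r_1u_{r_1}$ and $r_1v_{r_1}$ so that together they dominate $C\cap D'$. Once this geometric construction is in hand, the rest of the argument is pure planar combinatorics.
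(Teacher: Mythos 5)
Your high-level plan matches the paper's: for each bad $r$ the two rays of $\Lambda_r$ cut out a cone $C_r$ whose intersection with $D'$ is a connected piece of $\partial D'$ and therefore must contain a vertex of $D'$; with more than $n$ bad points two of them are forced (by pigeonhole) to share a vertex, and Proposition~\ref{prop:corner} then produces the point needed for (ii). The paper does not actually prove injectivity of $r\mapsto w_r$ — it simply observes that if two bad points share a vertex then (ii) holds, which suffices because the whole argument is under the standing assumption that (ii) fails. Your nesting argument for the slices $\Delta_{r_1},\Delta_{r_2}$ is an unnecessary detour for the same reason.

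The real gap is in your last paragraph: the "main obstacle" you flag — exhibiting $D_y$ — is not an obstacle at all, and your proposed fix (``enlarging and combining the Delaunay-witnessing homothets'' of the two edges) is heading in the wrong direction. The homothet you want is simply $D_{r_1}$, the fixed \emph{monochromatic} homothet containing the whole section $\sigma_{r_1}$, which exists precisely because $\sigma_{r_1}$ is a cutable section. It automatically contains $u_{r_1},r_1,v_{r_1}\in\sigma_{r_1}$; it contains $r_2$ because once the cone inclusion $C_{r_1}\cap D'\subset D_{r_1}$ is established, $r_2\in C_{r_1}\cap D'$ forces $r_2\in D_{r_1}$. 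The only thing requiring a (short) geometric justification is that for one of the two cones the inclusion $C_{r_1}\cap D'\subset D_{r_1}$ holds — this uses that homothets of a convex polygon form a pseudo-disk family, not any new construction. Using $D_{r_1}$ also cleans up your final hedge: the neighbor of $r_2$ produced by Proposition~\ref{prop:corner} lies in $D_{r_1}$, hence is red; since points of $R$ were chosen pairwise non-adjacent along their monochromatic path, every red neighbor of $r_2$ lies in $S\setminus R$, so you get exactly the point needed for (ii) with no appeal to ``the non-crossing property of the embedded $\Lambda$'s.''
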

\begin{figure}[h]
    \centering
		\includegraphics[width=8cm]{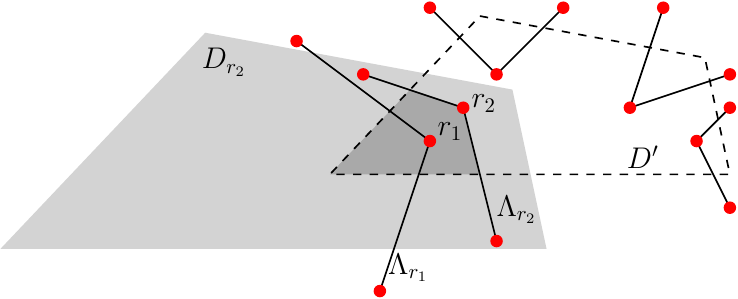}
	\caption{Proof of Proposition \ref{prop:most2side}.}
	\label{fig:twoside}
\end{figure}
\begin{proof}
Suppose that there are more than $n$ points $r\in R'$ for which $\Lambda_r$ intersects $D'$ in two sides.
For each such point $r\in R'$, for (at least) one of the two (one convex and one non-convex) cones whose sides are the halflines starting in $\Lambda_r$, denoted by $C_r$, we have $C_r\cap D'\subset D_r$.
Since the intersection $C_r\cap D'$ is a connected curve, it contains a vertex of $D'$.
Using the pigeonhole principle, there are two points, $r_1,r_2\in R'$, such that $C_{r_1}$ and $C_{r_2}$ contain the same vertex of $D'$.
(See Figure \ref{fig:twoside}.)
As $\Lambda_{r_1}\cap \Lambda_{r_2}=\emptyset$, we have (without loss of generality) $r_1\in C_{r_2}$, which also implies $r_1\in D_{r_2}$.
But using Proposition \ref{prop:connected}, $r_1$ must have a neighbor in $D'$.
Since this neighbor is also in $D_{r_2}$, it has to be red.
As the red neighbors of any red point of $R$ are not in $R$, we have found a red point from $S\setminus R$ in $D'$, proving (ii).
\end{proof}

Divide the points $r\in R'$ for which $\Lambda_r$ intersects only one side of $D'$ into $n$ groups, $R_1',\ldots,R_n'$, depending on which side is intersected.
By the pigeonhole principle there is a group, $R_i'$, that contains at least $\frac{t-n}n\ge 3$ points.
Suppose without loss of generality that the side $ab$ intersected by $\Lambda_r$ for $r\in R_i'$ is horizontal, bounding $D'$ from below.
For each $r\in R_i'$, fix and denote by $x_r$ a point from $\sigma_r$ whose $y$-coordinate is larger than the $y$-coordinate of $r$.
(Such a point exists because no $r\in R$ is extremal in $\sigma_r$.)
Denote the path from $r$ to $x_r$ in $\sigma_r$ by $P_r$, and the neighbor of $r$ in $P_r$ by $q_r$.

The geometric embedding of $P_r$ starts above $ab$ with $r$, then goes below $ab$ as $q_r\notin D'$, and finally $x_r$ is again above the line $ab$.
Denote the first intersection (starting from $r$) of the embedding of the path $P_r$ with the line $ab$ by $\alpha_r=\bar{rq_r}\cap\bar{ab}$, and the next intersection by $\beta_r$.
Since $|R_i'|\ge 3$, without loss of generality, there are $r_1,r_2\in R_i'$ such that $\beta_{r_1}$ is to the left of $\alpha_{r_1}$ and $\beta_{r_2}$ is to the left of $\alpha_{r_2}$.
For readability and simplicity, let $x_i=x_{r_i}$, $P_i=P_{r_i}$, $q_i=q_{r_i}$, $\alpha_i=\alpha_{r_i}$, $\beta_i=\beta_{r_i}$.

\begin{figure}
    \centering
		\includegraphics[width=5cm]{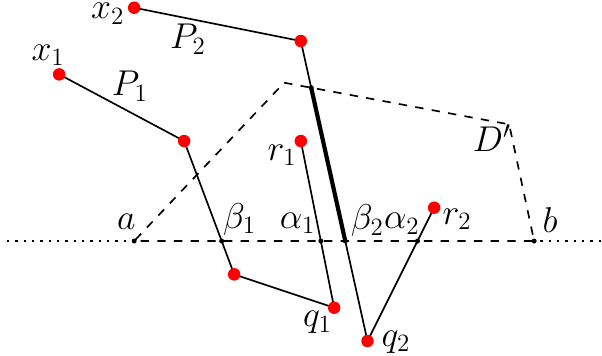}
		\hspace{1cm}
    \includegraphics[width=7cm]{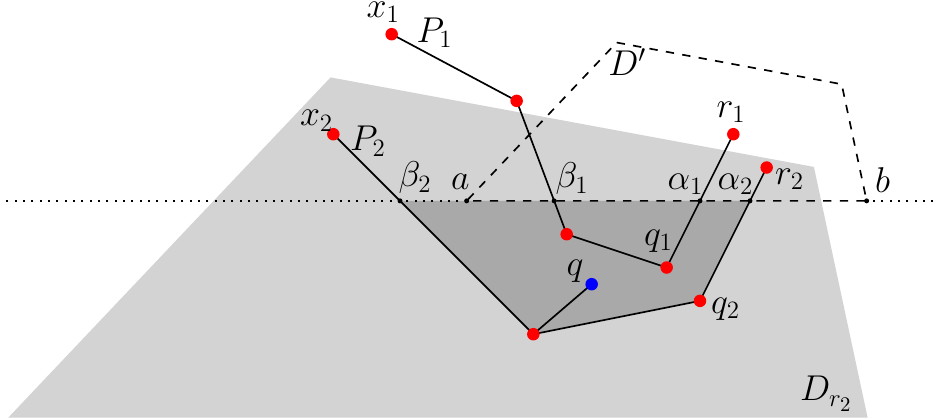}
	\caption{The two cases at the end of the proof of Theorem \ref{thm:main3}. To the left, $\beta_2$ is to the right of $\alpha_1$, the part of the edge splitting $D'$ is bold. To the right, $\beta_2$ is to the left of $\alpha_1$, the shaded regions must contain $q_1$.}
	\label{fig:r1r2}
\end{figure}

Without loss of generality suppose that $\alpha_1$ is to the left of $\alpha_2$.
Recall that $P_2$ contains only red points, of which only $r_2$ is in $R$.
Therefore, no other vertex of $P_2$ can be in $D'$.
If $\beta_2$ is to the right of $\alpha_1$, then one of the edges of $P_2$ would separate $r_1$ and $r_2$ in the sense described in
Corollary \ref{cor:split}.
(See Figure \ref{fig:r1r2}.)
As this cannot happen, $\beta_2$ is to the left of $\alpha_1$.

This implies that $q_1\notin P_2$ is in the convex hull of $P_2$ below the $ab$ line.
Take the point $q\in S\setminus P_2$ with the smallest $y$-coordinate such that $q$ is in the convex hull of $P_2$ below the $ab$ line.
As $q$ is not an extremal point of $S$, it is connected in \DT to some point in $S$ whose $y$-coordinate is smaller (because the faces of \DT are triangles).
By the definition of $q$, this neighbor must be in $P_2$.
As the end vertices of $P_2$, $r_2$ and $x_2$, are above the $ab$ line, $q$ is connected to an inner vertex of a monochromatic red path.
Since every monochromatic component is a path, $q$ cannot be red.
The homothet $D_{r_2}$ contains the red vertices of $P_2$ and thus all the points in the convex hull of $P_2$.
But $D_{r_2}$ is monochromatic, so it cannot contain the non-red point $q$, a contradiction.

This finishes the proof of Theorem \ref{thm:main3}.

\section{Higher dimensions}\label{sec:highdim}

In this section we study the following natural extension of the problem to higher dimensions.
Given a finite set of points $S\in \R^d$ and a family \F, can we $c$-color $S$ such that every $F\in \F$ contains at least two colors?
First we show that for $d\ge 4$ this is not even possible for hextants.
Define a (positive) {\em hextant} in $\R^4$ as the set of points $\{(x,y,z,w)\mid x\ge x_0, y\ge y_0, z\ge z_0, w\ge w_0\}$ for some real numbers $x_0,y_0,z_0,w_0$.
Cardinal noticed that hextants can simulate the axis-parallel rectangles of an appropriate subplane of $\R^4$ and thus the following holds.

\begin{theorem}[Cardinal\footnote{Cardinal (personal communication) stated this for $c=2$ using the same reduction based on \cite{PTT} about axis-parallel rectangles; Theorem \ref{thm:hex2rec} is only more general because we use a stronger result \cite{CPST09} about axis-parallel rectangles.}]\label{thm:hex2rec} For any $c$ and $m$ there is a finite point set $S$ such that for every $c$-coloring of $S$ there is a hextant that contains exactly $m$ points of $S$, all of the same color.
\end{theorem}
\begin{proof} As mentioned in the introduction, Chen et al. \cite{CPST09} have shown that for any $c$ and $m$ there is a finite planar point set $S$ such that for every $c$-coloring of $S$ there is an axis-parallel rectangle that contains exactly $m$ points of $S$, all of the same color.
Place this construction on the $\Pi=\{(x,y,z,w)\mid x+y=0$, $z+w=0\}$ subplane of $\R^4$.
A hextant $\{(x,y,z,w)\mid x\ge x_0, y\ge y_0, z\ge z_0, w\ge w_0\}$ intersects $\Pi$ in $\{(x,y,z,w)\mid x_0\le x=-y\le -y_0, z_0\le z=-w\le -w_0\}$, which is a rectangle whose sides are parallel to the lines $\{x+y=0, z=w=0\}$ and $\{x=y=0, z+w=0\}$, respectively.
Taking these perpendicular lines as axes, thus any ``axis-parallel'' rectangle of $\Pi$ is realizable by an appropriate hextant, and the theorem follows.
\end{proof}

Kolja Knauer observed\footnote{As our referee, disproving a bold conjecture we made in an earlier version of this paper.} that all axis-parallel rectangles of a subplane of $\R^3$ can be cut out in a similar way by the homothets of a (regular) tetrahedron.
Indeed, let $\Delta$ be the tetrahedron whose vertices are $(1,0,1),(-1,0,1),(0,-1,-1),(0,1,-1)$.
Let $\Delta_h$ be a translate of $\Delta$ by $-1<h<1$ parallel to the $z$-axis.
The intersection of $\Delta_h$ with the plane $\Pi=\{(x,y,z)\mid z=0\}$ yields an axis-parallel rectangle $R_h=\Delta_h\cap \Pi$.
The ratio of the sides of $R_h$ depends on $h$, and can take any value, as it tends to $\pm \infty$ as $h\to \pm 1$.
It follows that by taking a homothetic scaling of $D_h$, we can obtain any axis-parallel rectangle.
Just like in the proof of Theorem \ref{thm:hex2rec},
we obtain by \cite{CPST09} that for any $c$ and $m$ there is a finite point set $S\subset \R^3$ such that for every $c$-coloring of $S$ there is a homothet of $\Delta$ that contains exactly $m$ points of $S$, all of the same color.

For balls, however, we do not know of any counterexamples, even though a $3$-dimensional {\em Delaunay triangulation} of any number of points might induce a complete graph (for a recent proof, see \cite{GP16}).
We find it quite surprising that while in the plane convex polygons admit polychromatic colorings and disks do not, in the space it might be vice versa.
We could only prove the following weaker statement.

\begin{theorem}\label{thm:3d} For every $m$ there is a finite set of points $S\in \R^3$ such that for any $3$-coloring of $S$ there is a unit ball that contains exactly $m$ points of $S$, all of the same color.
\end{theorem}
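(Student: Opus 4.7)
The strategy I would follow mirrors the construction used in the preceding theorem on hextants in $\R^4$: reduce to the planar axis-parallel rectangle lower bound and realize that planar construction inside $\R^3$ so that the relevant rectangles appear as unit balls.

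Concretely, first I would invoke the Chen--Pach--Szegedy--Tardos result (cited in the introduction) with $c=3$ to obtain a finite planar point set $Q$ such that every $3$-coloring of $Q$ admits an axis-parallel rectangle containing exactly $m$ monochromatic points. Next, I would construct an injective embedding $\phi\colon Q \hookrightarrow \R^3$ together with, for each axis-parallel rectangle $R \subset \R^2$, a unit ball $B_R \subset \R^3$ satisfying $B_R \cap \phi(Q) = \phi(R \cap Q)$. Setting $S := \phi(Q)$ would then finish the proof: any $3$-coloring of $S$ pulls back to a $3$-coloring of $Q$; the planar bound produces a monochromatic rectangle $R$ of exactly $m$ points; and the corresponding $B_R$ then contains exactly $m$ points of $S$, all of the same color.

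The main difficulty is the construction of $\phi$. Unlike the hextant proof, which realizes axis-parallel rectangles on the $2$-plane $\{x+y=0,\ z+w=0\}$ of $\R^4$ purely by linear algebra, unit balls in $\R^3$ form only a $3$-parameter family while axis-parallel rectangles form a $4$-parameter family, so one cannot realize arbitrary rectangles. The natural attempt is to place the points of $Q$ on a cleverly chosen $2$-dimensional subset of $\R^3$---such as a portion of a paraboloid, or a pair of short near-perpendicular segments encoding the $x$- and $y$-coordinates of $Q$ separately---so that at the relevant scale unit balls behave essentially like halfspaces and cut out the desired rectangles. The freedom in positioning the points of $Q$ along this subset can then be exploited to realize exactly those rectangles needed by the planar lower bound. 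Making this combinatorial-to-geometric passage rigorous, so that every required rectangle $R$ gives rise to a unit ball $B_R$ that captures exactly the right $m$ points of $\phi(Q)$ and no others (so that the ``exactly $m$'' conclusion survives), is the real technical heart of the argument.
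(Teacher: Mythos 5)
Your approach is genuinely different from the paper's, and the gap you flag at the end is not a technicality to be filled in but a fatal obstruction. You propose to pull the Chen--Pach--Szegedy--Tardos axis-parallel rectangle construction into $\R^3$ via an embedding $\phi$ such that every rectangle is cut out by a unit ball. But the entire reason that construction uses axis-parallel rectangles rather than (pseudo-)disks is that rectangles can cross one another; the paper remarks explicitly that this is why the pseudo-disk hypothesis is essential in Conjecture~\ref{conj:pseudo}. Unit balls in $\R^3$ behave oppositely: as the paper notes right after Conjecture~\ref{conj:3d}, the Delaunay graph for balls in $\R^3$ always has a vertex of bounded degree, hence bounded chromatic number, whereas CPST (with $m=2$) produces planar point sets whose rectangle--Delaunay graph has arbitrarily large chromatic number. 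An embedding realizing \emph{every} rectangle by a unit ball, as you require, would transfer the one graph into the other and is therefore impossible. Placing points on a paraboloid or on short transversal segments only makes balls trace out pseudo-disk-like regions, which again cannot produce the crossing pattern that rectangles need. The hextant proof works because the intersection of a hextant with the $2$-plane $\{x+y=0,\ z+w=0\}$ is \emph{exactly} an axis-parallel rectangle by linear algebra; there is no analogous linear reduction for unit balls in $\R^3$.

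The paper's actual proof takes a direct route that makes no use of CPST. It builds an abstract multihypergraph $\HH(k,l,m)$ by a three-parameter recursion (generalizing the two-parameter hypergraph of \cite{concave}): at each step a new vertex $p$ is added, and the three edge classes $\E_1,\E_2,\E_3$ are assembled from the sub-hypergraphs $\HH(k-1,l,m)$, $\HH(k,l-1,m)$, $\HH(k,l,m-1)$. Lemma~\ref{lem:hipergraf} shows by induction that this hypergraph admits no proper $3$-coloring, and taking $k=l=m$ makes it $m$-uniform. The geometric realization by unit balls is then also recursive: $p$ is placed at the origin, the three sub-realizations are shifted by vectors of magnitudes $\Theta(\eps)$, $\Theta(\eps^2)$, and $\Theta(\eps^2)$ in three nearly orthogonal directions with $\eps$ shrinking by a factor $\eps^5$ at each level, and Proposition~\ref{prop:Hreal} checks the required incidences by explicit distance estimates. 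This extends the planar unit-disk construction of \cite{PP} by one extra parameter and one extra spatial dimension, rather than trying to import a rectangle hypergraph into the ball world.
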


Earlier such a construction with unit balls was only known for $2$-colorings \cite{PTT}.
For $2$-colorings the analogue of Theorem \ref{thm:3d} was also shown to hold when the family is the translates of any polyhedron instead of unit balls \cite{concave}.
The only known positive result is that for {\em octants} any finite set of points can be $2$-colored such that any octant that contains at least $9$ points contains both colors \cite{KP11,octantnine}.
We do not, however, know the answer for $3$-colorings and the translates or homothets of polyhedra.

The rest of this section contains a sketch of the proof of Theorem \ref{thm:3d}.
The reason why we only sketch the proof is that it is a simple modification of the planar construction with similar properties for unit disks from \cite{PP}.\\

{\bf Abstract hypergraph.}

First we define the abstract hypergraph that will be realized with unit balls.
It is a straight-forward generalization of the hypergraph defined first in \cite{concave}.
Instead of a single parameter, $m$, the induction will be on three parameters, $k,l$ and $m$.
For any $k,l,m$ we define the (multi)hypergraph $\HH(k,l,m)=(V(k,l,m),\E(k,l,m))$ recursively.
The edge set $\E(k,l,m)$ will be the disjoint union of three sets, $\E(k,l,m)=\E_1(k,l,m)\cupdot \E_2(k,l,m) \cupdot \E_3(k,l,m)$.
All edges belonging to $\E_1(k,l,m)$ will be of size $k$, all edges belonging to $\E_2(k,l,m)$ will be of size $l$, and all edges belonging to $\E_3(k,l,m)$ will be of size $m$.
We will prove that in every $3$-coloring of $\HH(k,l,m)$ with colors $c_1,c_2$ and $c_3$ there will be an edge in $\E_i(k,l,m)$ such that all of its vertices are colored $c_i$ for some $i\in \{1,2,3\}$.
If $k=l=m$, we get an $m$-uniform hypergraph that cannot be properly $3$-colored.

\begin{figure}[h]
    \centering
		\includegraphics[width=4cm]{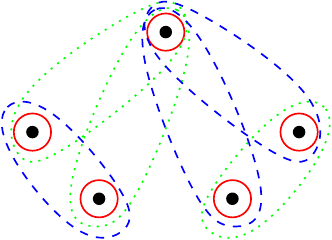}
		\hspace{1cm}
    \includegraphics[width=8cm]{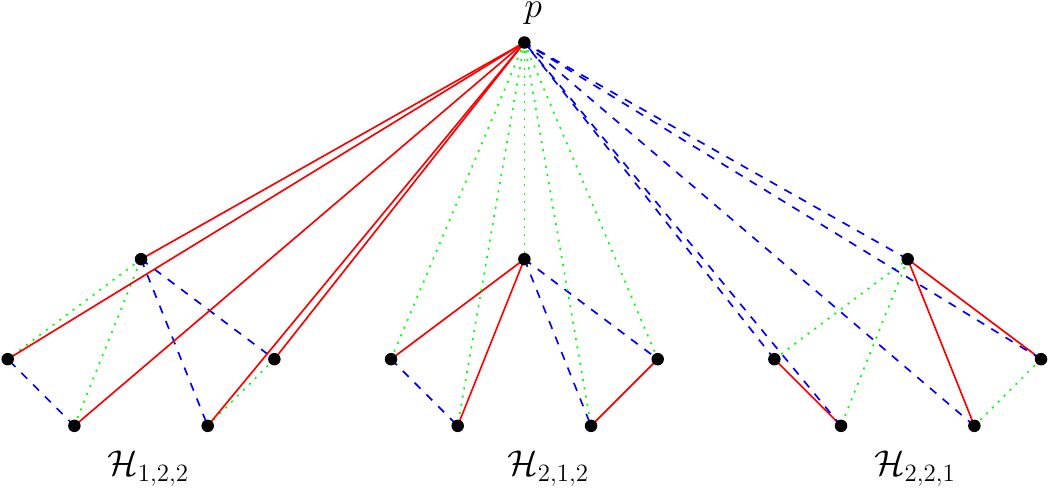}
	\caption{$\HH(1,2,2)$ drawn with sets (left) and $\HH(2,2,2)$ drawn as graph (right). Different colors represent the edges from the different families $\E_i$.}
	\label{fig:hh}
\end{figure}

Now we give the recursive definition.
Define $\HH(1,1,1)$ as a hypergraph on one vertex with three edges containing it, with one edge in each of $\E_1(1,1,1)$, $\E_2(1,1,1)$ and $\E_3(1,1,1)$.
If at least one of $k,l,m$ is bigger than $1$, define $\HH(k,l,m)$ recursively from $\HH(k-1,l,m)$, $\HH(k,l-1,m)$, $\HH(k,l,m-1)$ by adding a ``new'' vertex $p$ as follows.
$$V(k,l,m)=V(k-1,l,m)\cupdot V(k,l-1,m)\cupdot V(k,l,m-1)\cupdot \{p\}.$$

If $k=1$, then $\E_1(1,l,m)=\{\{v\} \,:\, v\in V(1,l,m)\}$, otherwise
$$\E_1(k,l,m)=\{e\cup \{p\} \,:\, e\in \E_1(k-1,l,m)\} \cupdot \E_1(k,l-1,m) \cupdot \E_1(k,l,m-1).$$

Similary, if $l=1$, then $\E_2(k,1,m)=\{\{v\} \,:\, v\in V(k,1,m)\}$, otherwise
$$\E_2(k,l,m)=\{e\cup \{p\} \,:\, e\in \E_2(k,l-1,m)\} \cupdot \E_2(k-1,l,m) \cupdot \E_2(k,l,m-1),$$

and if $m=1$, then $\E_3(k,l,1)=\{\{v\} \,:\, v\in V(k,l,1)\}$, otherwise
$$\E_3(k,l,m)=\{e\cup \{p\} \,:\, e\in \E_3(k,l,m-1)\} \cupdot \E_3(k-1,l,m) \cupdot \E_3(k,l-1,m).$$

\begin{lemma}\label{lem:hipergraf}
In every $3$-coloring of $\HH(k,l,m)$ with colors $c_1,c_2$ and $c_3$ there is an edge in $\E_i(k,l,m)$ such that all of its vertices are colored $c_i$ for some $i\in \{1,2,3\}$.
Therefore, $\HH(k,l,m)$ has no proper $3$-coloring.
\end{lemma}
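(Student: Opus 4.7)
The plan is to prove the first statement---the existence of an $\E_i(k,l,m)$-edge all of whose vertices are colored $c_i$ for some $i$---by induction on $k+l+m$; the ``no proper $3$-coloring'' clause follows immediately, since any $3$-coloring produces a monochromatic hyperedge. The recursive definition of $\HH(k,l,m)$ is tailored exactly for such an induction: at each step one family of edges is built by attaching the new apex $p$ to the edges of one predecessor hypergraph, while the other two edge families are inherited verbatim from the corresponding predecessors.

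For the base case $k=l=m=1$, the unique vertex gets some color $c_i$, and the singleton $\{p\}\in\E_i(1,1,1)$ is $c_i$-monochromatic. For the inductive step, fix a $3$-coloring of $\HH(k,l,m)$ with at least one parameter exceeding $1$, and let $p$ be the vertex adjoined by the recursion. By relabeling colors we may assume that $p$ is colored $c_1$. If $k=1$, then $\E_1(1,l,m)$ is by definition the collection of all singletons on $V(1,l,m)$, so $\{p\}$ itself is a $c_1$-monochromatic edge and we are done. Otherwise $k\ge 2$: restrict the coloring to $V(k-1,l,m)\subset V(k,l,m)\setminus\{p\}$ and apply the inductive hypothesis to $\HH(k-1,l,m)$, producing some $i\in\{1,2,3\}$ and an edge $e\in\E_i(k-1,l,m)$ whose vertices are all of color $c_i$.

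We then finish by a case split on $i$. If $i=1$, the recursion gives $e\cup\{p\}\in\E_1(k,l,m)$, and since $p$ and every vertex of $e$ have color $c_1$, this extended edge is $c_1$-monochromatic in $\E_1(k,l,m)$. If $i\in\{2,3\}$, the recursive formula directly includes $\E_i(k-1,l,m)$ as a subfamily of $\E_i(k,l,m)$, so $e$ itself already witnesses the claim. The subcases in which $p$ is colored $c_2$ or $c_3$ are entirely symmetric, reducing instead to $\HH(k,l-1,m)$ or $\HH(k,l,m-1)$ and using the analogous containments $\E_j(k,l-1,m)\subset\E_j(k,l,m)$ for $j\ne 2$ and $\E_j(k,l,m-1)\subset\E_j(k,l,m)$ for $j\ne 3$; the boundary cases $l=1$ or $m=1$ are again dispatched by the singleton observation.

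I do not expect a genuine obstacle here: the recursive definition is engineered so that the color of the apex dictates which predecessor hypergraph to peel off, and the two ``inherited'' edge families automatically propagate up any monochromatic edge of the two ``other'' colors. The only point demanding even a moment's care is the boundary of the recursion, where some index hits $1$ and the corresponding $\E_i$ is overridden to consist of singletons---but in every such instance the trivial singleton $\{p\}$ ends the argument immediately.
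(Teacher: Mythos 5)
Your proof is correct and follows essentially the same inductive argument as the paper: reduce by the index corresponding to the color of the new apex $p$, apply the inductive hypothesis to that predecessor hypergraph, and either inherit a monochromatic edge of a different color directly or extend a $c_i$-monochromatic $\E_i$-edge by $p$. The treatment of the base case and the boundary $k=1$ (or $l=1$, $m=1$) matches the paper's as well.
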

The proof is a simple modification of the respective statement from \cite{concave}.
\begin{proof} If $k=l=m=1$, the statement holds.
Otherwise, suppose without loss of generality that the color of $p$ is $c_1$.
If $k=1$, we are done as $\{p\}\in \E_1(1,l,m)$.
Otherwise, consider the copy of $\HH(k-1,l,m)$ contained in $\HH(k,l,m)$.
If it contains an edge in $\E_2(k-1,l,m)$ or $\E_3(k-1,l,m)$ such that its vertices are all colored $c_2$ or all colored $c_3$, respectively, we are done.
Otherwise, it contains an $e\in \E_1(k-1,l,m)$ such that its vertices are all colored $c_1$.
But then all the vertices of $(e\cup \{p\})\in \E_1(k,l,m)$ are also all colored $c_1$, we are done.
\end{proof}

{\bf Geometric realization.}

Now we sketch how to realize $\HH(k,l,m)$ by unit balls in $\R^3$.
The construction will build on the construction of \cite{PP}, where the edges belonging to $\E_1(k,l,1)\cupdot \E_2(k,l,1)$ of $\HH(k,l,1)$ were realized by unit disks.

The vertices $V(k,l,m)$ will be embedded as a point set, $S(k,l,m)$, and the edge set $\E_i(k,l,m)$ as a collection of unit balls, $\B_i(k,l,m)$, where a point is contained in a ball if and only if the respective vertex is in the respective edge.
All the points of $S(k,l,m)$ will be placed in a small neighborhood of the origin.
The centers of the balls from $\B_1(k,l,m)$, $\B_2(k,l,m)$ and $\B_3(k,l,m)$ will be close to $(0,-1,0)$, $(0,1,0)$ and $(0,0,-1)$, respectively.
The realization of $\HH(1,1,1)$ contains only one point, the origin, and one ball in each family, centered appropriately close to the required center.

\begin{figure}
    \centering
    \includegraphics[width=6cm]{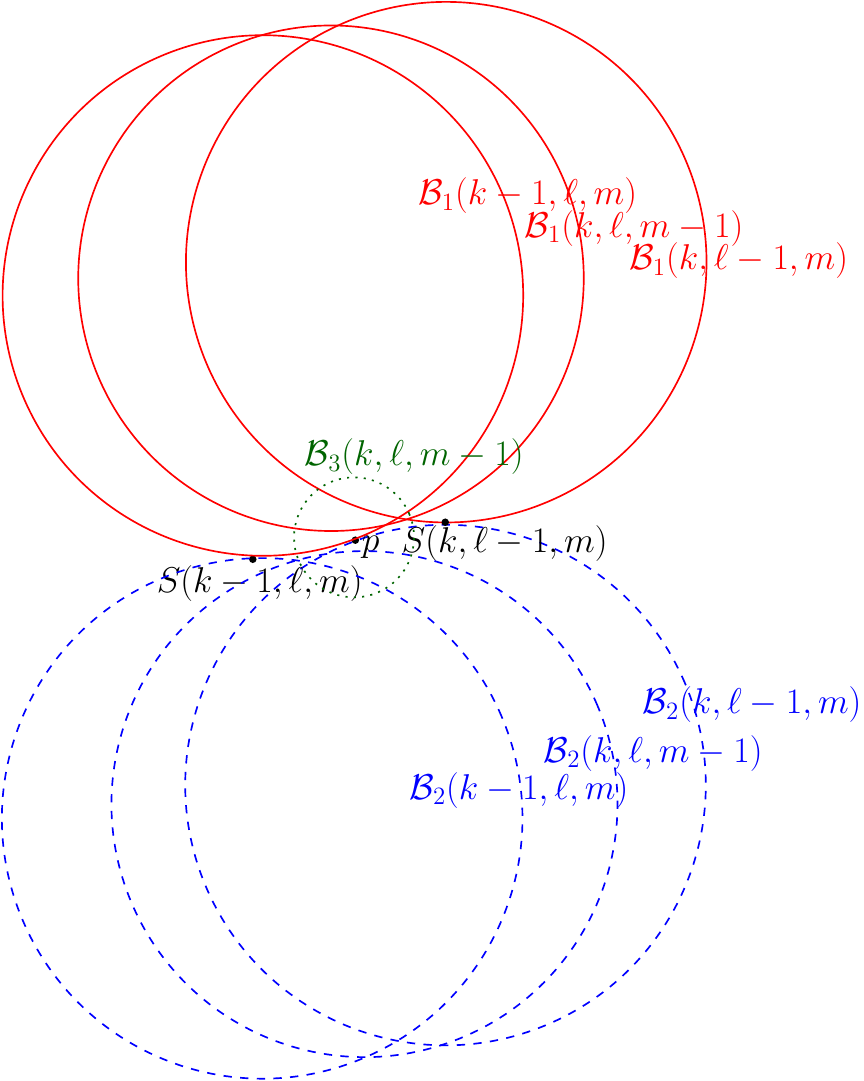}
	\caption{The intersection of $\HH(k,l,m)$ with the $z=0$ plane. Point sets/collections of balls that are at distance $O(\eps^5)$ are represented by a single point/ball. As the balls $\B_3(k-1,l,m)$ intersect in a $O(\eps^5)$ vicinity of $S(k-1,l,m)$ and the balls $\B_3(k,l-1,m)$ intersect in a $O(\eps^5)$ vicinity of $S(k,l-1,m)$, they are not drawn to avoid overcrowding the picture.}
	\label{fig:3dPi}
\end{figure}

Suppose that not all of $k,l,m$ are $1$, and we have already realized the hypergraphs $\HH(k-1,l,m)$, $\HH(k,l-1,m)$ and $\HH(k,l,m-1)$.
Place the new point $p$ in the origin, and shift the corresponding realizations (i.e., the point sets, $S(k-1,l,m)$, $S(k,l-1,m)$ and $S(k,l,m-1)$, and the collection of balls, $\B(k-1,l,m)$, $\B(k,l-1,m)$ and $\B(k,l,m-1)$) by the following vectors, where $\eps=\eps(k,l,m)$ is a small enough number, but such that $\eps(k-1,l,m)$, $\eps(k,l-1,m)$ and $\eps(k,l,m-1)$ are all $O(\eps^5(k,l,m))$.

\begin{itemize}
\item[1.] Shift $\HH(k-1,l,m)$ by $(2\eps-1.5\eps^3,2\eps^2,0)$.
\item[2.] Shift $\HH(k,l-1,m)$ by $(-2\eps+1.5\eps^3,-2\eps^2,0)$.
\item[3.] Shift $\HH(k,l,m-1)$ by $(0,0,2\eps^2)$.
\end{itemize}

For an illustration, see Figure \ref{fig:3dPi}.

\begin{proposition}\label{prop:Hreal} The above construction realizes $\HH(k,l,m)$.
\end{proposition}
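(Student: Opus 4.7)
The plan is to induct on $k+l+m$, with the base case $k=l=m=1$ handled directly by the construction: one point at the origin and one ball from each family placed close enough to its canonical center to contain that point. For the inductive step, I would assume that $\HH(k-1,l,m)$, $\HH(k,l-1,m)$, and $\HH(k,l,m-1)$ have each been realized inside a region of diameter $O(\eps^5)$ around their respective shifted origins, and then verify three things: that the new vertex $p$ at the origin is captured by exactly the shifted balls corresponding to edges of the form $e\cup\{p\}$; that no shifted ball from one sub-realization swallows any point of a different sub-realization; and that the translations preserve the already-established incidences within each sub-realization (trivial, since translation is an isometry).

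The heart of the argument is the Taylor expansion
\[
\|c'+v\|^2 - 1 \;=\; 2\,c'\!\cdot v \;+\; \|v\|^2,
\]
applied with $c'$ within $O(\eps^5)$ of one of the three canonical centers $(0,-1,0)$, $(0,1,0)$, $(0,0,-1)$ (depending on the family) and $v$ one of the three prescribed shift vectors. I would tabulate all nine combinations. For instance, a $\B_1$-ball shifted by $(2\eps-1.5\eps^3,\,2\eps^2,\,0)$ gives $c'\!\cdot v = -2\eps^2$ and $\|v\|^2 = 4\eps^2 - 2\eps^4 + O(\eps^6)$, so the two quadratic-in-$\eps$ terms cancel and $\|c'+v\|^2 = 1 - 2\eps^4 + O(\eps^6) < 1$, including $p$. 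The same ball shifted instead by $(-2\eps+1.5\eps^3,\,-2\eps^2,\,0)$ or by $(0,0,2\eps^2)$ yields a strictly positive correction, excluding $p$; the six symmetric calculations for $\B_2$ and $\B_3$ work out identically. The payoff is that $p$ lies in a shifted ball iff its family index matches the shift direction, which is exactly the pattern in the recursive definition of $\E_i(k,l,m)$.

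For the cross-realization step I would use essentially the same expansion, but now with a shifted $\B_i$-ball center compared against the shift target of a \emph{different} sub-realization. In each of the nine cross-pairs, the leading term of $\|c'+v\|^2-1$ is strictly positive of order $\Theta(\eps^2)$ or $\Theta(\eps^4)$, so the distance exceeds $1$ by a constant gap that dwarfs the $O(\eps^5)$ diameter of each sub-realization. Choosing $\eps(k,l,m)$ small enough so that $\eps(k-1,l,m), \eps(k,l-1,m), \eps(k,l,m-1)$ are all $O(\eps^5(k,l,m))$ then absorbs every lower-order error accumulated from the induction hypothesis, and the incidences inside each sub-realization persist because each sub-realization is merely translated as a rigid unit.

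The main obstacle is not any single inequality but the bookkeeping: nine incidence checks for $p$ and nine non-incidence checks across sub-realizations, each requiring one to identify whether the $2c'\!\cdot v$ term or the $\|v\|^2$ term dominates. The shift vectors are tuned precisely so that $2c'\!\cdot v$ is negative of order $\Theta(\eps^2)$ in exactly one of the three shifts per family and non-negative in the others; this sign pattern is what forces $p$ to be captured by the intended balls and no others, and it mirrors the three disjoint summands in the recursive definitions of $\E_1,\E_2,\E_3$. Once all nine cases are verified, the inductive claim — that the prescribed points and balls realize $\HH(k,l,m)$ — follows.
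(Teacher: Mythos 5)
Your proposal is correct and takes essentially the same approach as the paper: both verify the realization by expanding squared distances to second order in $\eps$, checking that the sign of the $\Theta(\eps^2)$ or $\Theta(\eps^4)$ leading term of $\|c'+v\|^2-1$ matches the intended incidence pattern, with the $O(\eps^5)$ diameter bound on sub-realizations absorbing all lower-order noise, and both show only a few representative cases and defer the rest as analogous. (Two minor quibbles that do not affect correctness: the $\B_3$ inclusion of $p$ is driven by an uncancelled $-4\eps^2$ rather than the $\eps^2$-cancellation you highlight for $\B_1$, so the six ``identical'' calculations are only identical in sign, not in mechanism; and the number of cross-sub-realization point/ball pairs is somewhat larger than nine, though symmetry and the declared ``similar'' checks cover them.)
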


The proof of this proposition is a routine calculation, we only show some parts. 

\begin{proof}
	Denote by $o_B$ the center of the ball $B$ and denote by $dist(p,q)$ the Euclidean distance of two points $p,q$.
\begin{enumerate}
\item $p\in B\in \B_1(k-1,l,m)$:
$$dist^2(p,o_B)=(2\eps-1.5\eps^3)^2+(1-2\eps^2)^2+O(\eps^5)=1-2\eps^4+O(\eps^5)<1.$$

\item $p\notin B\in \B_1(k,l-1,m)$: 
$$dist^2(p,o_B)=(2\eps-1.5\eps^3)^2+(1+2\eps^2)^2+O(\eps^5)=1+4\eps^2+O(\eps^3)>1.$$ 

\item $p\notin B\in \B_1(k,l,m-1)$: 
$$dist^2(p,o_B)=1^2+(2\eps^2)^2+O(\eps^5)=1+4\eps^4+O(\eps^5)>1.$$

\item If $s\in S(k,l-1,m)$, then $s\notin B\in \B_1(k-1,l,m)$: 
$$dist^2(s,o_B)=(4\eps-3\eps^3)^2+(1-4\eps^2)^2+O(\eps^5)=1+8\eps^2+O(\eps^3)>1.$$ 

\item If $s\in S(k,l-1,m)$, then $s\notin B\in \B_1(k,l,m-1)$: 
$$dist^2(s,o_B)=(2\eps-1.5\eps^3)^2+(1-2\eps^2)^2+(2\eps^2)^2+O(\eps^5)=1+2\eps^4+O(\eps^5)>1.$$

\item If $s\in S(k,l-1,m)$, then $s\notin B\in \B_3(k,l,m-1)$: 
$$dist^2(s,o_B)=(2\eps-1.5\eps^3)^2+(2\eps^2)^2+(1-2\eps^2)^2+O(\eps^5)=1+2\eps^4+O(\eps^5)>1.$$

\item If $s\in S(k,l,m-1)$, then $s\notin B\in \B_1(k-1,l,m)$: 
$$dist^2(s,o_B)=(2\eps-1.5\eps^3)^2+(1-2\eps^2)^2+(2\eps^2)^2+O(\eps^5)=1+2\eps^4+O(\eps^3)>1.$$

\end{enumerate}

The other incidences can be checked similarly and thus Proposition \ref{prop:Hreal} follows.
\end{proof}

Lemma \ref {lem:hipergraf} and Proposition \ref{prop:Hreal} imply Theorem \ref{thm:3d} by selecting $k=l=m$, therefore this also finishes the proof of Theorem \ref{thm:3d}.

\section{Further remarks}\label{sec:further}

Combining Theorems \ref{thm:main3} and \ref{thm:selfcover}, for any convex polygon, $D$, and for any finite point set, $S$, we can first find a $3$-coloring of $S$ using Theorem \ref{thm:main3} such that every large (in the sense that it contains many points of $S$) homothet of $D$ contains two differently colored points, then using Theorem \ref{thm:selfcover} we can conclude that every very large homothet of $D$ contains many points from at least two color classes, and finally we can recolor every color class separately using Theorem \ref{thm:main3}.
This proves that for every $k$ there is a $3^k$-coloring such that every large homothet of $D$ contains at least $2^k$ colors.
Of course, the colors that we use when recoloring need not be different for each color class, so we can also prove for example that there is a $6$-coloring such that every large homothet of $D$ contains at least $3$ colors.
What are the best bounds of this type that can be obtained?\\

Given a planar graph, $G$, and a pair of paths on three vertices, $uvw$ and $u'vw'$, we say that the paths {\em cross} at $v$ if $u,u',w,w'$ appear in this order around $v$. 
A possible equivalent reformulation of Conjecture \ref{conj:pseudo} is the following.
Is it true that for any planar graph and any pairwise non-crossing collection of its paths on three vertices, \P, there is a $3$-coloring of the vertices such that every path from \P is non-monochromatic?\\

Finally, we would like to draw attention to the study of {\em realizable hypergraphs}.
Unfortunately, {\em planar hypergraphs} are traditionally defined dully as a hypergraph whose (bipartite) incidence graph is planar.
Instead, it would be more natural to define them as the hypergraphs realizable by a pseudo-disk arrangement in the sense that the vertices are embedded as points and the edges as pseudo-disks such that a point is contained in a pseudo-disk if and only if the respective vertex is in the respective edge.
This was done in \cite{BPR13}, where it was proved that such a hypergraph on $n$ vertices can have at most $O(k^2n)$ edges that each contain at most $k$ points, while there can be at most $3n-6$ edges containing exactly two points, matching Euler's bound for planar graphs.
Despite \cite{BPR13}, these hypergraphs received little attention and even simple statements are highly non-trivial; see the recent proof by Kisfaludi-Bak\footnote{\url{http://mathoverflow.net/a/257212/955}.} that the complete $3$-uniform hypergraph on $5$ vertices is not realizable by pseudo-disks.
We believe that these hypergraphs deserve more attention.

\subsubsection*{Acknowledgment}

We would like to thank our anonymous referees for several suggestions that improved the presentation of our results, and to Arnau Padrol for explaining to us the example in \cite{GP16}.

\bibliography{3colwrthomot}
		
\end{document}